\documentclass[preprint,11pt,a4paper]{elsarticle}
\usepackage[T1]{fontenc}
\usepackage[utf8]{inputenc}
\usepackage{lmodern}
\usepackage{amsmath,amssymb,amsthm,mathtools}
\usepackage[margin=27mm,headheight=14pt]{geometry}
\usepackage{microtype}
\biboptions{numbers,sort&compress}
\usepackage{hyperref}
\hypersetup{hidelinks,pdftitle={Critical one-component regularity criteria for the three-dimensional Navier--Stokes equations},pdfauthor={},pdfsubject={One-component regularity; Navier--Stokes equations; critical Sobolev and Besov spaces}}
\numberwithin{equation}{section}
\newtheorem{theorem}{Theorem}[section]
\newtheorem{proposition}[theorem]{Proposition}
\newtheorem{lemma}[theorem]{Lemma}
\theoremstyle{remark}
\newtheorem{remark}[theorem]{Remark}
\newcommand{\R}{\mathbb R}
\newcommand{\Z}{\mathbb Z}
\newcommand{\dd}{\,\mathrm d}
\newcommand{\eps}{\varepsilon}
\newcommand{\diver}{\operatorname{div}}
\newcommand{\curl}{\operatorname{curl}}
\newcommand{\sgn}{\operatorname{sgn}}

\newcommand{\Ltwo}{L^2(\R^3)}
\newcommand{\Lq}{L^q(\R^3)}
\newcommand{\Lthree}{L^3(\R^3)}
\newcommand{\Hs}[1]{\dot H^{#1}(\R^3)}
\newcommand{\Hh}{\dot H^{-\delta,0}(\R^3)}
\newcommand{\Bs}{\dot B^{3/2}_{2,r}(\R^3)}
\newcommand{\mixed}[2]{L^{#1}(\R_{x_3};L^{#2}(\R^2_{x_h}))}
\newcommand{\pair}[2]{\left\langle #1,#2\right\rangle_{L^2(\R^3)}}
\newcommand{\pairh}[2]{\left\langle #1,#2\right\rangle_{\Hh}}
\newcommand{\norm}[2]{\left\lVert #1\right\rVert_{#2}}

\newcommand{\doi}[1]{\href{https://doi.org/#1}{\nolinkurl{doi:#1}}}
\newcommand{\arxiv}[1]{\href{https://arxiv.org/abs/#1}{\nolinkurl{arXiv:#1}}}
\allowdisplaybreaks[2]
\journal{Journal of Differential Equations}

\begin{document}
\begin{frontmatter}
\title{A Critical one-component regularity criteria for the three-dimensional Navier--Stokes equations}

\author{
Maotuo Guo\footnote{School of Science, Harbin University of Science and Technology},
~~
Shiyang Xiong\footnote{School of Mathematical Sciences, Shanghai Jiao Tong University, Shanghai 200240,
China,P. R. China. 
\textbf{Corresponding author}. Email: xiongshiyang@amss.ac.cn},
~~
Chuankai Zhao\footnote{School of Mathematics, Statistics and Mechanics, Beijing University of Technology, Beijing 100124,
China.  Email: chuankaizhao@amss.ac.cn}
}

\begin{abstract}
We establish two critical one-component regularity criteria for the three-dimensional incompressible Navier--Stokes equations. First, for divergence-free initial data $u_0\in H^1(\mathbb R^3)$, we prove that a strong solution can be continued beyond a finite time $T$ provided that
\[
 u^3\in L^2\bigl(0,T;\dot B^{3/2}_{2,r}(\mathbb R^3)\bigr)
 \qquad\text{for some }2<r\leq\infty.
\]
Second, for every $2<p<\infty$, if $u_0\in\dot H^{1/2}(\mathbb R^3)$ and, for some fixed unit vector $\boldsymbol e\in\mathbb S^2$,
\[
 u\cdot\boldsymbol e
 \in
 L^p\bigl(0,T;\dot H^{1/2+2/p}(\mathbb R^3)\bigr),
\]
then the corresponding Fujita--Kato solution also extends beyond $T$, without any additional low-integrability assumption on the initial vorticity.

These results extend the critical one-component regularity criteria of Han, Lei, Li, and Zhao in two different directions. At the time endpoint $p=2$, we combine a new vertical-vorticity estimate with coupled anisotropic energy estimates and an energy-dependent frequency decomposition to relax the dyadic square-summability condition
\[
 \dot H^{3/2}=\dot B^{3/2}_{2,2}
\]
to
\[
 \dot B^{3/2}_{2,r},
 \qquad 2<r\leq\infty,
\]
with the endpoint case $r=\infty$ closed by an Osgood-type argument. In the nonendpoint range $2<p<\infty$, we use a positive-time heat-flow decomposition to separate the solution into a smooth linear part and a nonlinear remainder. The remainder has finite energy and acquires the required $L^q$-integrability of the vorticity at positive times, which allows the anisotropic energy estimates to be applied while the additional transport, stretching, and pressure terms generated by the heat flow remain integrable. This removes the extra $L^{q_0}$-integrability assumption on the initial vorticity while retaining the natural critical initial-data space $\dot H^{1/2}(\mathbb R^3)$.
\end{abstract}
\begin{keyword}
Navier--Stokes equations \sep one-component regularity \sep critical Sobolev spaces \sep critical Besov spaces \sep anisotropic estimates \sep continuation criteria
\MSC[2020] 35Q30 \sep 35B65 \sep 76D05
\end{keyword}
\end{frontmatter}

\section{Introduction}
\label{sec:intro}
We consider the Cauchy problem for the three-dimensional incompressible Navier--Stokes equations
\begin{equation}\label{eq:NS}
\begin{cases}
\partial_tu+u\cdot\nabla u-\Delta u+\nabla\pi=0,
    &(t,x)\in(0,T)\times\R^3,\\
\diver u=0, &(t,x)\in(0,T)\times\R^3,\\
u|_{t=0}=u_0, &x\in\R^3.
\end{cases}
\end{equation}
Here $u=(u^1,u^2,u^3)$ and $\pi$ denote the velocity and pressure, the viscosity is normalized to one, and the spatial domain is $\R^3$. We study scale-invariant regularity criteria expressed in terms of a single fixed velocity component. The paper has two related objectives: to weaken the spatial frequency summability at the time endpoint $p=2$, and to remove the auxiliary initial-vorticity hypothesis from the nonendpoint critical Sobolev criteria.

For finite-energy initial data, Leray \cite{Leray1934} and Hopf \cite{Hopf1951} constructed weak solutions satisfying
\begin{equation}\label{eq:basic-energy}
\norm{u(t)}{\Ltwo}^{2}
 +2\int_0^t\norm{\nabla u(s)}{\Ltwo}^{2}\dd s
 \leq \norm{u_0}{\Ltwo}^{2}.
\end{equation}
For divergence-free $u_0\in H^1(\R^3)$, the local strong-solution theory gives a unique maximal solution, smooth at positive times, and continuation follows as long as its $H^1$ norm remains bounded; see Fujita and Kato \cite{FK1964}, Kato \cite{Kato1984}, and Giga \cite{Giga1986}. For critical initial data $u_0\in\dot H^{1/2}(\R^3)$, the Fujita--Kato theory gives the corresponding maximal mild solution.

The scaling
\begin{equation}\label{eq:scaling}
u_\lambda(t,x)=\lambda u(\lambda^2t,\lambda x),
\qquad
\pi_\lambda(t,x)=\lambda^2\pi(\lambda^2t,\lambda x)
\end{equation}
preserves \eqref{eq:NS}. The norm of a velocity component in $L^p(0,T;\dot B^s_{m,r})$ has scaling exponent $1+s-3/m-2/p$, and hence the critical relation is
\begin{equation}\label{eq:critical-indices}
s=-1+\frac3m+\frac2p.
\end{equation}
The Besov summation index $r$ does not enter the scaling. Our endpoint result has $p=m=2$, $s=3/2$, and $2<r\leq\infty$. Our nonendpoint result keeps $m=2$ and treats every finite $p>2$ in the critical Sobolev space $\dot H^{1/2+2/p}$. Throughout the paper, \emph{endpoint} refers to the time exponent $p=2$ in this one-component Sobolev family.

\subsection{One-component regularity criteria}
The full-velocity Ladyzhenskaya--Prodi--Serrin condition is
\begin{equation}\label{eq:LPS}
 u\in L^p(0,T;L^m(\R^3)),\qquad
 \frac2p+\frac3m=1,\qquad 3<m\leq\infty;
\end{equation}
see \cite{Ladyzhenskaya1967,Prodi1959,Serrin1962,Giga1986}. The limiting condition $u\in L^\infty(0,T;L^3)$ was established by Escauriaza, Seregin, and \v Sver\'ak \cite{ESS2003}. In a one-component criterion the remaining velocity components are not prescribed and must be recovered through incompressibility and the equations.

Early one-component results include \cite{NP1999,He2002,KZ2006,ZP2010}. Chae and Wolf \cite{CW2021} treated the strictly subcritical condition
\[
 u^3\in L^p(0,T;L^m),\qquad \frac2p+\frac3m<1,\qquad 3<m\leq\infty,
\]
in a finite-energy solution class. Wang, Wu, and Zhang \cite{WWZ2024} obtained the critical time-Lorentz condition
\begin{equation}\label{eq:WWZ}
 u^3\in L^{p,1}(0,T;L^m),\qquad
 \frac2p+\frac3m=1,\qquad 3<m<\infty.
\end{equation}
The time-Lorentz norm in \eqref{eq:WWZ} is different from the ordinary $L^p$ time norm used below.

The anisotropic energy approach uses the scale-invariant quantity
\begin{equation}\label{eq:Ip}
\int_0^T\norm{u(t)\cdot \boldsymbol e}{\Hs{1/2+2/p}}^p\dd t,
\qquad \boldsymbol e\in\mathbb S^2.
\end{equation}
Chemin and Zhang \cite{CZ2016} proved the corresponding criterion for $4<p<6$, assuming initial vorticity in $L^{3/2}$. Chemin, Zhang, and Zhang \cite{CZZ2017} extended the range to $4<p<\infty$ under an $L^{3/2}\cap L^2$ initial-vorticity assumption. Han, Lei, Li, and Zhao \cite{HLLZ2019} treated $2\leq p\leq4$ and also recovered $4<p<\infty$; their theorem assumes $u_0\in\dot H^{1/2}$ together with $\curl u_0\in L^{q_0}$ for some $1<q_0<2$. In particular, their result includes the endpoint condition
\begin{equation}\label{eq:sobolev-endpoint}
u^3\in L^2(0,T;\dot H^{3/2}(\R^3)).
\end{equation}
The low-exponent initial-vorticity assumption is a genuine additional hypothesis and is not supplied by the critical velocity space alone.

The method couples the vertical vorticity $\omega=\partial_1u^2-\partial_2u^1$ with $u^3$. Writing $u^h=(u^1,u^2)$, incompressibility gives
\begin{equation}\label{eq:intro-reconstruction}
u^h=\nabla_h^\perp\Delta_h^{-1}\omega
       -\nabla_h\Delta_h^{-1}\partial_3u^3.
\end{equation}
Thus the horizontal curl and divergence of $u^h$ are determined by $\omega$ and $\partial_3u^3$. The resulting coupled energy estimates balance the horizontal inverse Laplacian by negative horizontal regularity.

Liu and Zhang \cite{LZ2019} obtained criteria involving Lebesgue and anisotropic Besov conditions. Their later strong-solution theorem \cite{LZ2024} permits a piecewise $H^1$ direction $\beta(t)\in\mathbb S^2$ and assumes
\[
 \int_0^T\norm{u(t)\cdot\beta(t)}{\Hs{3/2}}^2\dd t<\infty.
\]
No separate $L^{q_0}$ initial-vorticity condition is imposed in that strong-solution setting. Our endpoint result concerns a fixed direction but weakens the spatial frequency summability from $\ell^2$ to $\ell^r$, including $\ell^\infty$.

Recent preprints consider critical Besov refinements. Luo, Yin, and Zheng \cite{LYZ2026} establish, under an additional initial-vorticity hypothesis, a criterion in
\begin{equation}\label{eq:LYZ}
L^p\bigl(0,T;\dot B^{1/2+2/p}_{2,\infty}(\R^3)\bigr),
\qquad 2<p<\infty.
\end{equation}
Guo, Wang, and Xiong \cite{GWX2026} obtain a criterion for finite-energy suitable weak solutions in
\begin{equation}\label{eq:GWX}
\widetilde L^p\bigl(0,T;\dot B^0_{m,1}(\R^3)\bigr),
\qquad m=\frac{3p}{p-2},\qquad 2<p<\infty.
\end{equation}
The Chemin--Lerner convention in \eqref{eq:GWX} places the time norm inside the dyadic sum. These preprints do not contain the time endpoint $p=2$ considered in our first theorem. Immediately after Theorem~\ref{thm:nonendpoint}, Remark~\ref{rem:besov-extension} records a conditional route by which the heat-flow perturbation might be combined with the fixed-time Besov estimates of \cite{LYZ2026}. The perturbed Besov estimates required to turn that route into a theorem are not proved here.

\subsection{Main results}
Our endpoint condition is
\begin{equation}\label{eq:criterion}
u^3\in L^2\bigl(0,T;\dot B^{3/2}_{2,r}(\R^3)\bigr),
\qquad 2<r\leq\infty.
\end{equation}
For $r>2$, the strict embedding
\[
\dot H^{3/2}(\R^3)=\dot B^{3/2}_{2,2}(\R^3)
 \hookrightarrow\dot B^{3/2}_{2,r}(\R^3)
\]
shows that \eqref{eq:sobolev-endpoint} implies \eqref{eq:criterion}, but not conversely. Thus regularity under the weaker Besov assumption does not follow from the Sobolev criterion alone; the missing square summability is recovered through the coupled dissipation.

\begin{theorem}[Endpoint Besov criterion]\label{thm:main}
Let $u_0\in H^1(\R^3)$ be divergence free, and let $u$ be the maximal strong solution of \eqref{eq:NS} on $[0,T^*)$. If $T^*<\infty$ and, for some $2<r\leq\infty$,
\begin{equation}\label{eq:main-condition}
\int_0^{T^*}\norm{u^3(t)}{\Bs}^{2}\dd t<\infty,
\end{equation}
then $u$ extends as a strong solution beyond $T^*$.
\end{theorem}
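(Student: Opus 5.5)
We follow the anisotropic energy method of Han, Lei, Li and Zhao, and insert a frequency cutoff depending on the energy at the place where $\ell^2$ summability of $u^3$ was used. Since $u_0\in H^1$, the solution is smooth on $(0,T^*)$ and has finite energy. By \eqref{eq:basic-energy}, $\nabla u\in L^2(0,T^*;\Ltwo)$. Also $\curl u(t_0)\in L^{q}$ for $t_0>0$ and every $q\in(1,2]$, since $\curl u\in L^2$ and the heat smoothing of the energy solution gives the low exponents. So the $L^{q_0}$ vorticity hypothesis of \cite{HLLZ2019} holds after a time shift. The goal is an a priori bound on $\sup_{t<T^*}\norm{\nabla u(t)}{\Ltwo}$, which gives continuation.

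\textbf{Coupled estimates.} Take $\omega=\partial_1u^2-\partial_2u^1$. It satisfies
\[
\partial_t\omega+u\cdot\nabla\omega-\Delta\omega=\partial_3u^3\,\omega+\partial_2u^3\partial_3u^1-\partial_1u^3\partial_3u^2 .
\]
We estimate $\norm{|\omega|^{q/2}}{L^2}^{2/q}$ for some $3/2<q<2$. We also estimate $\norm{\nabla u^3}{\Hh}$ via the $u^3$ equation, whose pressure is $\partial_3\pi$ with $\pi=\sum_{i,j}(-\Delta)^{-1}\partial_i\partial_j(u^iu^j)$. Then we substitute \eqref{eq:intro-reconstruction} for $u^h$. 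Every nonlinear term contains at least one factor of $u^3$, or of $\partial_3u^3$ through $\diver_hu^h=-\partial_3u^3$. The standard Littlewood–Paley paraproduct estimates then bound each term by a product of three pieces: a $u^3$ norm at regularity $3/2$, the two dissipations $D(t)=\norm{\nabla|\omega|^{q/2}}{L^2}^2+\norm{\nabla\nabla u^3}{\Hh}^2$, and the energy quantity $E(t)$ to a power.

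\textbf{Key step: replacing $\dot H^{3/2}$ by $\dot B^{3/2}_{2,r}$.} Split $u^3=S_Nu^3+(I-S_N)u^3$ at a frequency $2^N$.
\begin{itemize}
\item For low frequencies, $\norm{S_Nu^3}{\Hs{3/2}}\lesssim N^{1/2-1/r}\norm{u^3}{\Bs}$ by Hölder in $\ell^r$. For $r=\infty$ the factor is $N^{1/2}$.
\item For high frequencies, the Besov norm is never used. We pay with a positive power $2^{-N\theta}$ times the dissipation $D$, or times $\norm{\nabla u}{L^2}^2$, which is integrable in time.
\end{itemize}
Choose $N\sim\log(e+E(t))$, so that $2^{-N\theta}E^{\text{power}}\lesssim1$. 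The resulting differential inequality should take the form
\[
\frac{d}{dt}E+D\lesssim\norm{u^3}{\Bs}^2\bigl(\log(e+E)\bigr)^{1-2/r}E+\norm{\nabla u}{\Ltwo}^2E .
\]
For $r<\infty$, Young's inequality and the integrability of $\norm{u^3}{\Bs}^2$ close a Gronwall-type argument on $\log(e+E)$. Where Hölder leaves a power slightly above $1$, the slack is absorbed into the dissipation. For $r=\infty$ the inequality is $E'\lesssim f(t)E\log(e+E)$ with $f\in L^1$. Here $\int^\infty\frac{ds}{s\log s}=\infty$, so the Osgood lemma gives a bound on $E$ on $[t_0,T^*)$.

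\textbf{Main obstacle, and recovering $\nabla u$.} The hardest part is the trilinear terms in which $u^3$ appears only through $\partial_3u^3$ inside $\nabla_h\Delta_h^{-1}$. There the high-frequency piece must gain $2^{-N\theta}$ while staying compatible with the negative horizontal regularity $\dot H^{-\delta,0}$. I would first try placing the entire $u^3$ regularity on the low-frequency side and using Bernstein's inequality in the anisotropic form, with the $\mixed{2}{q}$ norms. Once $E$ and $\int D$ are bounded, $\omega\in L^\infty L^q$ and $\nabla u^3\in L^\infty\dot H^{-\delta,0}$ with the matching dissipation. Interpolation then places $u$ in a Serrin-type class. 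Standard $H^1$ estimates, as in Kato's continuation theory, bound $\norm{\nabla u}{L^2}$ up to $T^*$ and extend the solution.
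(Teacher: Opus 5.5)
Your proposal has a genuine gap at the first step. You claim that $\curl u(t_0)\in L^q$ for $q\in(1,2)$ after a time shift, and your scheme needs this because $E(t_0)$ contains $\norm{\omega(t_0)}{\Lq}^2$. The claim is false for general $H^1$ data. The heat semigroup improves regularity and raises integrability exponents, but it does not create decay at infinity. So the linear part $e^{t_0\Delta}\curl u_0$ lies in $L^q$, $q<2$, only if the data already do. For instance, take widely separated translates of one compactly supported divergence-free profile with nonzero vertical vorticity, with coefficients in $\ell^2\setminus\ell^q$. Their sum gives $u_0\in H^1$ with $e^{t_0\Delta}\omega_0\notin L^q$. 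The Duhamel part is in $L^q$ by the estimate behind \eqref{eq:initial-X}, hence $\omega(t_0)\notin L^q$.

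The paper never applies the coupled estimates to $u$ itself. It subtracts the heat flow $U(t)=e^{(t-a)\Delta}u(a)$ and works with $v=u-U$, whose vorticity lies in $L^q$ at $t=b$ by the Duhamel formula \eqref{eq:Duhamel}. The price is extra transport, stretching and pressure terms containing $U$. Lemma~\ref{lem:common-perturbation} shows these contribute only $gE$ with $g\in L^1(b,T^*)$. Since $U^3$ is bounded in $\Hs{3/2}\hookrightarrow\Bs$ on $[b,T^*)$, $v^3$ inherits the hypothesis via \eqref{eq:perturbed-Besov}.

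The second gap is the fixed-time estimate. Osgood closure requires $|\mathcal N_\omega(v)|+|\mathcal N_3(v)|\leq C\norm{v^3}{\Hs{3/2}}E^{1/2}D^{1/2}$ with no logarithm. After Young's inequality the middle frequencies already cost $(\log E)^{1-2/r}$, which at $r=\infty$ is exactly the Osgood threshold. Any additional logarithmic loss therefore breaks the argument. The vorticity estimate available in \cite{HLLZ2019} at this endpoint carries such a logarithm, which is why the paper replaces it. ``Standard paraproduct estimates'' do not give the log-free bound.

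The decisive term is $\int\partial_iv^3\,\partial_j\Delta_h^{-1}\partial_3\omega\,\omega|\omega|^{q-2}\dd x$. It comes from inserting the curl part of \eqref{eq:reconstruction} into $\partial_2v^3\partial_3v^1-\partial_1v^3\partial_3v^2$. The order $-1$ horizontal multiplier gains only horizontal integrability. So H\"older with exponents $(2,\tfrac{2q}{2-q},q')$ in $x_h$ and $(\infty,q,q')$ in $x_3$ forces $\nabla_hv^3\in\mixed{\infty}{2}$. Lemma~\ref{lem:trace} bounds this by $\norm{v^3}{\Hs{3/2}}$, precisely because of the horizontal derivative, even though $\dot H^{1/2}(\R)\not\hookrightarrow L^\infty(\R)$. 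The terms you single out, with $\partial_3^2v^3$ under $\nabla_h\Delta_h^{-1}$, are easier. In the vorticity equation they follow from anisotropic Sobolev embedding \eqref{eq:Idiv}, and in the $v^3$ equation from Lemma~\ref{lem:HLLZ}.

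Three smaller points:
\begin{itemize}
\item Once the bound is linear in the scalar $\norm{v^3}{\Hs{3/2}}$, you split that norm rather than $u^3$ inside the trilinear forms.
\item Your two-piece split is wrong as stated. H\"older over all $j\leq N$ involves infinitely many blocks, so the range $j<-N$ needs separate control, e.g.\ by $2^{-N(1/2+\delta)}\norm{\nabla v^3}{\Hh}$ as in \eqref{eq:low}.
\item Bounded $E$ and $\int D$ do not give a full Serrin class, since $\partial_3u^h$ is not controlled. They give $\omega_u,\nabla u^3\in L^2(b,T^*;L^3)$. The $H^1$ bound then uses the structure in Lemma~\ref{lem:full-gradient}: every cubic term contains $\nabla_hu^h$ or $\nabla u^3$.
\end{itemize}
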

In particular, the criterion reaches the weakest spatial summability endpoint $r=\infty$ in the stated family. By rotational invariance, $u^3$ may be replaced by $u\cdot\boldsymbol e$ for any fixed $\boldsymbol e\in\mathbb S^2$. No smallness or low-exponent initial-vorticity assumption is imposed beyond the stated $H^1$ initial data.

\begin{theorem}[Nonendpoint Sobolev criterion without an initial-vorticity hypothesis]
\label{thm:nonendpoint}
Let $u_0\in\dot H^{1/2}(\R^3)$ be divergence free, and let $u$ be its maximal Fujita--Kato mild solution on $[0,T^*)$. Thus
\[
 u\in C([0,T^*);\dot H^{1/2}(\R^3))
       \cap L^2_{\mathrm{loc}}([0,T^*);\dot H^{3/2}(\R^3)).
\]
Let $2<p<\infty$, set $s_p=1/2+2/p$, and fix $\boldsymbol e\in\mathbb S^2$. If $T^*<\infty$ and
\begin{equation}\label{eq:nonendpoint-condition}
 \int_0^{T^*}\norm{u(t)\cdot\boldsymbol e}{\dot H^{s_p}(\R^3)}^p\dd t<\infty,
\end{equation}
then $u$ extends as a Fujita--Kato solution beyond $T^*$. No assumption $\curl u_0\in L^{q_0}(\R^3)$, $1<q_0<2$, and no additional condition $u_0\in L^2(\R^3)$ are imposed.
\end{theorem}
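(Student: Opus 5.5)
The plan is to reduce Theorem~\ref{thm:nonendpoint} to the Han--Lei--Li--Zhao anisotropic energy argument \cite{HLLZ2019}, applied to a nonlinear remainder that has better integrability than the data. By rotational invariance we take $\boldsymbol e=e_3$. Fix a small $t_0\in(0,T^*)$. Since $u$ is smooth at positive times, $u(t_0)\in \dot H^{1/2}\cap\dot H^1$. It need not lie in $L^2$, and $\curl u(t_0)$ need not lie in $L^{q_0}$ for any $q_0<2$.

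To fix this we split the data. Choose $\tau>0$ and write $u(t_0)=e^{\tau\Delta}u(t_0)+w_0$ with $w_0=(1-e^{\tau\Delta})u(t_0)$. The low-frequency defect $1-e^{\tau\Delta}$ carries a factor $|\xi|^2\tau$ near $\xi=0$. Hence $w_0\in L^2$ with $\norm{w_0}{\Ltwo}\lesssim\tau^{1/4}\norm{u(t_0)}{\Hs{1/2}}$, and $w_0$ can be made small in $\dot H^{1/2}$ by choosing $\tau$ small.

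For $t\ge t_0$ we set $a(t)=e^{(t-t_0+\tau)\Delta}u(t_0)$, the smooth linear part, and $w=u-a$. Then $w$ solves the perturbed system
\[
\partial_tw+(a+w)\cdot\nabla w+w\cdot\nabla a-\Delta w+\nabla\pi=-a\cdot\nabla a,\qquad \diver w=0,\qquad w(t_0)=w_0.
\]
All norms of $a$ on $[t_0,T^*]$ are controlled by positive powers of $\tau^{-1}$ times $\norm{u(t_0)}{\Hs{1/2}}$. In particular $a\in L^\infty_tW^{k,\infty}\cap L^\infty_tL^3$. The forcing $a\cdot\nabla a$ lies in $L^\infty_tL^2$, and $\curl(a\cdot\nabla a)$ lies in every $L^q$ with $q\ge 3/2$, since $a\in L^3$ and $\nabla^2a\in L^\infty$. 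An energy estimate then gives $w\in L^\infty L^2\cap L^2\dot H^1$ on $[t_0,T^*)$. Because $\curl w_0=(1-e^{\tau\Delta})\curl u(t_0)$, interpolating between the $L^2$ norm of $w_0$ and its $\dot H^1$ and $\dot H^2$ norms does not directly give $\curl w_0\in L^{q_0}$. Instead we restart at a slightly later time $t_1$. Writing $\curl w$ in Duhamel form and using the $L^2$ energy of $w$, we show that $\curl w(t_1)\in L^{q_0}$ for some $q_0\in(1,2)$; this is the meaning of "acquires $L^q$-integrability at positive times." The terms $\curl(a\cdot\nabla a)$, $\curl(w\cdot\nabla a)$ and $\curl(a\cdot\nabla w)$ are products of an $L^2$ or $L^3$ factor with a bounded factor. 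So they lie in $L^{q}$ for $q$ slightly below $2$, or in $L^{6/5}$-type spaces after using the $L^3$ bound on $a$ and its derivatives, and the heat semigroup handles the rest.

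Next we run the coupled estimates of \cite{HLLZ2019} on $(\omega_w,\partial_3w^3)$, where $\omega_w=\partial_1w^2-\partial_2w^1$, in the anisotropic spaces $\dot H^{-\delta,0}$ together with the $L^{q}$-type vorticity norm. The hypothesis enters through $w^3=u^3-a^3$: since $a^3\in L^p\dot H^{s_p}$ on $[t_0,T^*]$, we get $w^3\in L^p(t_0,T^*;\dot H^{s_p})$. The original HLLZ terms are handled exactly as in their proof, with the multiplier $\int_{t_0}^{T^*}\norm{u^3}{\Hs{s_p}}^p\dd t$ in a Gronwall factor. The new terms are the transport by $a$, the stretching $w\cdot\nabla a$, and the extra pressure pieces $\nabla\Delta^{-1}\diver(a\otimes w+w\otimes a+a\otimes a)$. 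Each is bounded using $\norm{a}{W^{2,\infty}}\lesssim\tau^{-1}$ and the basic energy of $w$. They contribute linear Gronwall terms or integrable forcing whose size depends only on $\tau$ and $T^*$. The outcome is a bound for $\omega_w$ and $\partial_3w^3$ in the HLLZ quantities uniformly up to $T^*$. This yields $\nabla w\in L^\infty(t_1,T^*;L^2)$, hence $u\in L^\infty(t_1,T^*;\dot H^1)$ and, combined with the $L^2$-based pieces, in $\dot H^{1/2}$. From there, $u\in L^4\dot H^1$ near $T^*$ extends the Fujita--Kato solution by the standard blow-up criterion.

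\textbf{Main obstacle.} The hardest step is the bookkeeping of the new terms inside the negative-regularity estimate for $\omega_w$, in particular the pressure term $\partial_3\Delta^{-1}\diver\diver(a\otimes w)$ paired in $\dot H^{-\delta,0}$. Horizontal negative regularity interacts with the horizontal inverse Laplacian in the reconstruction \eqref{eq:intro-reconstruction}. I would first bound these pairings by $\norm{a}{W^{2,\infty}}$ times $\norm{w}{\Ltwo}^{\theta}\norm{\nabla w}{\Ltwo}^{1-\theta}$ times the HLLZ energy, using anisotropic Sobolev embedding in $x_h$. I would check that the resulting exponent keeps the term integrable in time, so that it enters Gronwall linearly rather than superlinearly.
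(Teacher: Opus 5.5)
\textbf{Gap: your remainder does not acquire $L^{q_0}$ vorticity.} Because of the shift $\tau>0$, your remainder starts from $w(t_0)=w_0=(1-e^{\tau\Delta})u(t_0)\neq0$. Its vorticity at $t_1$ is therefore
\[
 \curl w(t_1)=e^{(t_1-t_0)\Delta}(1-e^{\tau\Delta})\curl u(t_0)
 +\int_{t_0}^{t_1}e^{(t_1-s)\Delta}\bigl(\text{forcing}\bigr)(s)\dd s,
\]
and you examine only the integral. The free term never becomes $L^{q_0}$-integrable for $q_0<2$. The heat semigroup does not map $L^2$ into $L^m$ for $m<2$, and neither it nor $1-e^{\tau\Delta}$ improves decay at spatial infinity. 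Up to a bilinear Duhamel contribution that does lie in $L^{q_0}$, the free term equals $(e^{t_1\Delta}-e^{(t_1+\tau)\Delta})\curl u_0$, so it inherits the low integrability of $\curl u_0$. For example, take $u_0$ to be a superposition of widely separated, compactly supported divergence-free bumps with nonzero vertical vorticity and amplitudes in $\ell^2\setminus\ell^{q_0}$. Then $u_0\in H^1$, but this term, and hence $\omega_w(t_1)$, is not in $L^{q_0}$. The auxiliary energy is then infinite at $t_1$, and the Gronwall argument cannot start. This is exactly the obstruction behind the Han--Lei--Li--Zhao hypothesis $\curl u_0\in L^{q_0}$: if restarting at a positive time produced $L^{q_0}$ vorticity by itself, that hypothesis would be trivially removable. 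Your $L^2$ bound on $w$ is correct but gives no control of $L^{q_0}$ norms below $2$. Note also that an $L^2$ factor times a bounded factor stays in $L^2$; the gain to $L^q$, $q<2$, comes only from pairing an $L^2$ factor with an $L^\ell$ factor, where $1/q=1/\ell+1/2$.

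\textbf{Repair.} Drop the shift and let the linear part be the exact heat flow of $u(t_0)$, i.e.\ $a(t)=e^{(t-t_0)\Delta}u(t_0)$, so that $w(t_0)=0$. This is the paper's decomposition $U(t)=e^{(t-a)\Delta}u(a)$, $v=u-U$, with the paper's times $a<b$ playing the roles of your $t_0<t_1$. The shift bought nothing, since $u(t_0)$ is already smooth; uniform coefficient bounds come from working on $[t_1,T^*)$. Now $w$ and $\curl w$ are pure Duhamel integrals of bilinear expressions. Since $u\otimes u\in L^{3/2}$, the $L^{3/2}\to L^2$ heat estimate with one derivative gives $w(t)\in L^2$. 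Since $u\otimes\Omega_u-\Omega_u\otimes u\in L^q$ (with $u\in L^\ell$, $\Omega_u\in L^2$), one gets $\curl w(t_1)\in L^q$. With that change, the rest of your outline matches the paper: fixed-time Han--Lei--Li--Zhao bounds for the principal pairings, heat-flow transport, stretching and pressure errors entering linearly with an integrable coefficient, then Gronwall. You should still make one step explicit. Interpolation must turn the bounds on the auxiliary energy and dissipation into $\omega_u,\nabla u^3\in L^2(t_1,T^*;L^3)$. Then the full-gradient energy inequality gives $\nabla u\in L^\infty(t_1,T^*;L^2)$: every cubic term contains $\nabla_hu^h$ or $\nabla u^3$, and $\nabla_hu^h$ is controlled by $\omega_u,\partial_3u^3$ through horizontal Riesz transforms. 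This does not follow automatically from the auxiliary quantities.
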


\begin{remark}[A conditional route to the nonendpoint Besov scale]
\label{rem:besov-extension}
The heat-flow decomposition used below is conceptually independent of the particular fixed-time estimate chosen for the principal nonlinear terms. The recent preprint of Luo, Yin, and Zheng \cite{LYZ2026} establishes fixed-time estimates in which the Sobolev factor $\dot H^{s_p}$ is replaced by $\dot B^{s_p}_{2,\infty}$, where $s_p=1/2+2/p$. Formally, their estimates provide a bound of the form
\begin{equation}\label{eq:remark-besov-interface}
 |\mathcal N_\omega|+|\mathcal N_3|
 \leq C\norm{v^3}{\dot B^{s_p}_{2,\infty}}
       \mathcal E_p^{1/p}\mathcal D_p^{1-1/p}.
\end{equation}
If, in addition, the multiplier, transport, and pressure estimates for the smooth heat-flow coefficients are verified in the auxiliary anisotropic spaces used in \cite{LYZ2026}, then the perturbative scheme of Section~\ref{sec:nonendpoint}, together with
\[
 \norm{e^{(t-a)\Delta}u^3(a)}{\dot B^{s_p}_{2,\infty}}
 \leq C(t-a)^{-1/p}\norm{u(a)}{\dot H^{1/2}},\qquad t\geq b>a,
\]
would lead to the analogous continuation criterion under
\[
 u\cdot\boldsymbol e\in
 L^p\bigl(0,T^*;\dot B^{s_p}_{2,\infty}(\R^3)\bigr),
 \qquad 2<p<\infty,
\]
without a separate $L^{q_0}$ assumption on the initial vorticity. The required perturbed Besov energy estimates, particularly those in the $p>4$ anisotropic regime, are not established in the present paper. Accordingly, this paragraph is only a methodological observation and no nonendpoint Besov theorem is asserted here.
\end{remark}

Theorem~\ref{thm:nonendpoint} retains the natural Fujita--Kato initial-velocity class and removes the additional initial-vorticity condition from the critical Sobolev criterion of \cite{HLLZ2019}. The endpoint $p=2$ is treated separately by Theorem~\ref{thm:main}, and no assertion for $p=\infty$ is made.

\paragraph{Contributions and analytic inputs.}
The endpoint theorem replaces the $\ell^2$ summation in $\dot H^{3/2}$ by $\ell^r$, including $\ell^\infty$, while retaining ordinary $L^2$ time integration. Its new ingredients are a logarithm-free vertical-vorticity estimate and an energy-dependent frequency decomposition that uses the coupled dissipation to compensate for the missing dyadic square summability. The nonendpoint theorem uses the fixed-time Sobolev estimates of Han, Lei, Li, and Zhao \cite{HLLZ2019}; the new step is a positive-time heat-flow decomposition that produces the auxiliary $L^q$ vorticity integrability for the nonlinear remainder and controls every additional transport, stretching, and pressure term. The main theorems therefore use fixed-time inputs only from the published work \cite{HLLZ2019}. The recent Besov estimates of \cite{LYZ2026} are mentioned only in the conditional methodological Remark~\ref{rem:besov-extension}; no result in the paper depends on that remark.

\subsection{Outline of the proof}
The endpoint argument couples $v^3$ with the vertical vorticity $\omega=\partial_1v^2-\partial_2v^1$. A horizontal-derivative trace estimate gives
\begin{equation}\label{eq:intro-trace}
\norm{\nabla_hv^3}{\mixed{\infty}{2}}
 \leq C\norm{v^3}{\Hs{3/2}},
\end{equation}
and controls the term containing a horizontal potential of $\partial_3\omega$ without a logarithmic loss. Together with the third-component estimate of \cite{HLLZ2019}, this yields
\[
E'+cD\leq C\norm{v^3}{\Hs{3/2}}E^{1/2}D^{1/2}+gE,
\qquad g\in L^1.
\]
An isotropic frequency decomposition at a level $N\simeq\log E$ absorbs the high frequencies into $D$ before Young's inequality is applied. The middle frequencies contribute $N^{1/2-1/r}$, leading to
\[
E'\lesssim (1+\norm{v^3}{\Bs}^2+g)
 E(\log E)^{1-2/r},
\]
which is closed by Osgood's lemma, including $r=\infty$.

For both the endpoint theorem and the removal of the nonendpoint initial-vorticity condition, we fix $0<a<b<T^*$ and write
\[
 U(t)=e^{(t-a)\Delta}u(a),\qquad v=u-U.
\]
The nonlinear remainder has finite kinetic energy even when the full velocity does not, and a vorticity Duhamel formula gives $\curl v(b)\in L^q$ for some $q<2$. In the nonendpoint Sobolev range, the fixed-time estimates of \cite{HLLZ2019} and the heat-flow error bounds give an ordinary Gronwall inequality. The auxiliary bounds then yield uniform $L^2$ control of the full velocity gradient and $L^2$ control of its dissipation. A terminal-trace lemma then yields a strong $H^1$ limit for the remainder and permits continuation beyond $T^*$.

The paper is organized around the two main proofs. Section~\ref{sec:notation} collects the function-space conventions, the common heat-flow remainder and perturbation estimates, and the full-gradient and terminal-continuation tools. Section~\ref{sec:endpoint} proves the endpoint Besov criterion in Theorem~\ref{thm:main}; Section~\ref{sec:nonendpoint} proves the nonendpoint Sobolev criterion in Theorem~\ref{thm:nonendpoint}. The conditional Besov observation remains separate in Remark~\ref{rem:besov-extension}.

\section{Preliminaries and continuation tools}\label{sec:notation}
The common estimates in this section are independent of the component hypothesis. They will be used with the endpoint Besov condition in Section~\ref{sec:endpoint} and the nonendpoint Sobolev condition in Section~\ref{sec:nonendpoint}.

\subsection{Notation and function spaces}
We write $x=(x_h,x_3)\in\R^2\times\R$, $x_h=(x_1,x_2)$, and
\[
\nabla_h=(\partial_1,\partial_2),\qquad
\nabla_h^\perp=(-\partial_2,\partial_1),\qquad
\Delta_h=\partial_1^2+\partial_2^2.
\]
For a vector field $v$, let $v^h=(v^1,v^2)$ and $\Omega=\curl v$. The scalar $\omega=\Omega^3$ will always denote its vertical vorticity. We use $|\nabla|^s$, $|\nabla_h|^s$, and $|\partial_3|^s$ for the Fourier multipliers with symbols $|\xi|^s$, $|\xi_h|^s$, and $|\xi_3|^s$, respectively. The Fourier transform $\mathcal F$ is unitary on $L^2(\R^d)$.

The homogeneous Sobolev norm is
\[
\norm{f}{\dot H^s(\R^d)}^2
 =\int_{\R^d}|\xi|^{2s}|\widehat f(\xi)|^2\dd\xi.
\]
The inhomogeneous space $H^s(\R^d)$ uses the weight $(1+|\xi|^2)^s$. Following the anisotropic notation in \cite[Definition 2.1]{CZZ2017}, we set
\begin{equation}\label{eq:anisotropic-norm}
\norm{f}{\dot H^{s,s'}(\R^3)}^2
 =\int_{\R^3}|\xi_h|^{2s}|\xi_3|^{2s'}
                  |\widehat f(\xi)|^2\dd\xi.
\end{equation}
All homogeneous spaces are understood with their usual distributional interpretation. In particular,
\[
\norm{f}{\Hh}
 =\norm{|\nabla_h|^{-\delta}f}{\Ltwo},
\qquad
\langle f,g\rangle_{\Hh}
 =\pair{|\nabla_h|^{-\delta}f}{|\nabla_h|^{-\delta}g}.
\]
The exponent $\delta$ is fixed in \eqref{eq:delta}. Thus the two indices in $\dot H^{s,s'}$ always refer to horizontal and vertical regularity, in that order.

For $1\leq a,b<\infty$, the mixed norm is
\begin{equation}\label{eq:mixed-norm}
\norm{f}{\mixed{b}{a}}
 =\left(\int_\R
  \left(\int_{\R^2}|f(x_h,x_3)|^a\dd x_h\right)^{b/a}
 \dd x_3\right)^{1/b},
\end{equation}
with the usual essential-supremum conventions. A horizontal Sobolev norm at fixed $x_3$ is written $\dot H^s(\R^2_{x_h})$. In particular,
\[
\norm{|\nabla_h|^sf}{\Ltwo}
 =\norm{f}{L^2(\R_{x_3};\dot H^s(\R^2_{x_h}))}.
\]
We also write $\norm{f}{a}=\norm{f}{L^a(\R^3)}$ when the spatial domain is unambiguous. Time arguments are suppressed in estimates at a fixed time. Constants $C$ and $c>0$ may change from line to line; their dependence on a fixed exponent is indicated when needed. They do not depend on the frequency cutoff or on time.

\paragraph{Littlewood--Paley decomposition and time norms.}
Choose a smooth radial function $\chi$, equal to one near the origin and supported in a ball, and put $\varphi(\xi)=\chi(\xi/2)-\chi(\xi)$. Then
\[
\sum_{j\in\Z}\varphi(2^{-j}\xi)=1\qquad(\xi\neq0).
\]
The homogeneous dyadic blocks and low-frequency operators on $\R^3$ are
\[
\dot\Delta_jf=\mathcal F^{-1}\bigl(\varphi(2^{-j}\xi)\widehat f(\xi)\bigr),
\qquad
\dot S_jf=\mathcal F^{-1}\bigl(\chi(2^{-j}\xi)\widehat f(\xi)\bigr).
\]
For the Littlewood--Paley and paradifferential calculus, we refer to Bony \cite{Bony1981} and \cite{BCD2011}. The homogeneous Besov norm is
\begin{equation}\label{eq:Besov-definition}
\norm{f}{\dot B^s_{m,r}(\R^3)}
 =\norm{\bigl(2^{js}\norm{\dot\Delta_jf}{L^m(\R^3)}\bigr)_{j\in\Z}}
 {\ell^r(\Z)}.
\end{equation}
When $r=\infty$, the sequence norm is a supremum. For finite-energy fields, the $L^2(\R^3)$ representative fixes the polynomial ambiguity. For the Fujita--Kato solution in Theorem~\ref{thm:nonendpoint}, we instead use its $L^3(\R^3)$ representative, supplied by $\dot H^{1/2}\hookrightarrow L^3$. No $L^2$ representative of the full velocity is assumed.

The time norm in Theorem~\ref{thm:main} is
\begin{equation}\label{eq:Bochner}
\norm{f}{L^2(0,T;\Bs)}
 =\left(\int_0^T
 \norm{\bigl(2^{3j/2}\norm{\dot\Delta_jf(t)}{\Ltwo}\bigr)_{j\in\Z}}
 {\ell^r(\Z)}^2\dd t\right)^{1/2}.
\end{equation}
It differs from the Chemin--Lerner norm \cite{CL1995,BCD2011}
\begin{equation}\label{eq:CL}
\norm{f}{\widetilde L^2(0,T;\Bs)}
 =\norm{\bigl(2^{3j/2}\norm{\dot\Delta_jf}{L^2(0,T;L^2(\R^3))}\bigr)_{j\in\Z}}
 {\ell^r(\Z)}.
\end{equation}
For $r\geq2$, Minkowski's inequality gives
\begin{equation}\label{eq:Bochner-CL-direction}
 \norm{f}{\widetilde L^2(0,T;\dot B^{3/2}_{2,r})}
 \leq \norm{f}{L^2(0,T;\dot B^{3/2}_{2,r})}.
\end{equation}
Thus the distinction made here concerns the order of the time norm and the dyadic summation; the ordinary Bochner condition is not presented as weaker than its Chemin--Lerner analogue. All frequency estimates below are applied at a fixed time before time integration, and the resulting bounds are then integrated in time.

\subsection{Horizontal reconstruction and auxiliary energies}
The letter $r$ is reserved for the Besov summation index. For the common tools, fix $3/2<q<2$ and set
\begin{equation}\label{eq:delta}
\delta=\frac3q-\frac32\in\left(0,\frac12\right).
\end{equation}
The value of $q$ is chosen independently in Sections~\ref{sec:endpoint} and \ref{sec:nonendpoint} to meet the cited spatial estimates, and is then fixed within each proof. It never depends on time or on a frequency cutoff. Unless specified otherwise, the reconstruction below is first applied to a smooth divergence-free field in $H^m(\R^3)$, $m\geq3$. We set
\begin{equation}\label{eq:omega-f}
\omega=\partial_1v^2-\partial_2v^1.
\end{equation}
We use the signed-power notation of \cite[equation (2.9)]{CZZ2017}:
\[
a_\gamma=\sgn(a)|a|^\gamma\qquad(\gamma>0).
\]
The signed and unsigned powers have the same gradient norm. H\"older's inequality gives
\begin{equation}\label{eq:G-W}
\norm{\nabla\omega}{\Lq}
 \leq\frac2q\norm{\omega}{\Lq}^{1-q/2}
              \norm{\nabla\omega_{q/2}}{\Ltwo}.
\end{equation}
Indeed, $|\nabla\omega|=(2/q)|\omega|^{1-q/2}|\nabla\omega_{q/2}|$ almost everywhere. The identity and estimate are justified by regularization at $\omega=0$.

Incompressibility and \eqref{eq:omega-f} give the horizontal Biot--Savart formula
\begin{equation}\label{eq:reconstruction}
v^h=\nabla_h^\perp\Delta_h^{-1}\omega
       -\nabla_h\Delta_h^{-1}\partial_3v^3.
\end{equation}
This is the curl--div decomposition used in \cite{CZ2016,CZZ2017,HLLZ2019}. The inverse horizontal Laplacian and its derivatives are interpreted as homogeneous Fourier multipliers. The identity is first verified for $\xi_h\neq0$. Since $v\in L^2(\R^3)$, its Fourier transform cannot have a nonzero part supported on $\{\xi_h=0\}$, so no additional horizontal harmonic component is present.

We shall also use the following consequence of incompressibility. For a divergence-free $v\in H^1(\R^3)$,
\[
|\xi_3\widehat v^3(\xi)|\leq|\xi_h|\,|\widehat v^h(\xi)|.
\]
Consequently,
\begin{equation}\label{eq:Y-H1}
\begin{split}
\norm{\nabla v^3}{\Hh}^2
 &=\int_{\R^3}|\xi_h|^{-2\delta}|\xi|^2
                         |\widehat v^3(\xi)|^2\dd\xi\\
 &\leq\int_{\R^3}|\xi_h|^{2-2\delta}|\widehat v(\xi)|^2\dd\xi
 \leq C_\delta\norm{v}{H^1(\R^3)}^2.
\end{split}
\end{equation}

Define the auxiliary energy and dissipation by
\begin{equation}\label{eq:ED}
\begin{aligned}
E(t)&=\mathrm e+\norm{\omega(t)}{\Lq}^{2}
          +\norm{\nabla v^3(t)}{\Hh}^{2},\\
D(t)&=\norm{\omega(t)}{\Lq}^{2-q}
          \norm{\nabla\omega_{q/2}(t)}{\Ltwo}^{2}
          +\norm{\nabla^2v^3(t)}{\Hh}^{2}.
\end{aligned}
\end{equation}
Here $\mathrm e=\exp(1)$. These are the auxiliary energy and dissipation for the coupled estimates, not the kinetic energy in \eqref{eq:basic-energy}. Vector and tensor norms are square sums over all components; in particular, $\nabla^2v^3$ includes every ordered pair of derivatives. The first term in $D$ is set to zero when $\norm{\omega}{\Lq}=0$.

For later use, abbreviate
\begin{equation}\label{eq:common-XYZW}
 X=\norm\omega q,\quad
 W=X^{1-q/2}\norm{\nabla\omega_{q/2}}2,\quad
 Y=\norm{\nabla v^3}{\Hh},\quad
 Z=\norm{\nabla^2v^3}{\Hh}.
\end{equation}
Then $E=\mathrm e+X^2+Y^2$, $D=W^2+Z^2$, and $\norm{\nabla\omega}q\leq(2/q)W$. Given a smooth divergence-free field $w$, let $\pi_w$ be its Riesz pressure,
\begin{equation}\label{eq:Pi}
 -\Delta\pi_w=\sum_{i,j=1}^3\partial_iw^j\,\partial_jw^i,
\end{equation}
and define the two principal nonlinear pairings by
\begin{equation}\label{eq:common-pairings}
\begin{aligned}
 \mathcal N_\omega(w)
 &=X^{2-q}\int_{\R^3}((\curl w)\cdot\nabla w^3)\omega_{q-1}\dd x,\\
 \mathcal N_3(w)
 &=\sum_{k=1}^3\big\langle
 \partial_k(w\cdot\nabla w^3+\partial_3\pi_w),
 \partial_kw^3\big\rangle_{\Hh},
\end{aligned}
\end{equation}
where $X,\omega$ in this formula are formed from $w$. The value of $\mathcal N_\omega$ is set to zero when $X=0$.

\subsection{Multipliers and the common heat-flow remainder}\label{subsec:common-heat}
We first record the multiplier bounds used to control the smooth heat-flow coefficients. 
\begin{lemma}\label{lem:products}
Let $0<\delta<1$. For smooth $f,g$ with finite right-hand sides,
\begin{equation}\label{eq:product1}
\norm{fg}{\Hh}
 \leq C_\delta\norm{f}{\mixed{\infty}{2/\delta}}
                 \norm{g}{\Ltwo}.
\end{equation}
On each horizontal plane,
\begin{equation}\label{eq:product2}
\norm{fg}{\dot H^{-\delta}(\R^2)}
 \leq C_\delta\bigl(\norm{f}{L^\infty(\R^2)}
                   +\norm{\nabla_h f}{L^2(\R^2)}\bigr)
                   \norm{g}{\dot H^{-\delta}(\R^2)}.
\end{equation}
Estimate \eqref{eq:product1} extends to $f\in\mixed{\infty}{2/\delta}$ and $g\in L^2(\R^3)$. In \eqref{eq:product2}, multiplication extends by duality to $f\in L^\infty(\R^2)\cap\dot H^1(\R^2)$ and $g\in\dot H^{-\delta}(\R^2)$. The corresponding multiplication operator on $\dot H^{-\delta,0}(\R^3)$ is bounded by the essential supremum in $x_3$ of the coefficient in \eqref{eq:product2}.
\end{lemma}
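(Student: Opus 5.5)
Both estimates reduce to two-dimensional Sobolev embeddings and duality on each horizontal plane, applied for fixed $x_3$; the vertical variable only enters through Fubini.

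\emph{Proof of \eqref{eq:product1}.} By definition,
\[
\norm{fg}{\Hh}^2=\int_\R\norm{f(\cdot,x_3)g(\cdot,x_3)}{\dot H^{-\delta}(\R^2)}^2\dd x_3 .
\]
So it suffices to prove the planar bound $\norm{FG}{\dot H^{-\delta}(\R^2)}\le C_\delta\norm{F}{L^{2/\delta}(\R^2)}\norm{G}{L^2(\R^2)}$.

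For $0<\delta<1$, the dual Sobolev embedding gives $L^{p}(\R^2)\hookrightarrow\dot H^{-\delta}(\R^2)$ with $1/p=1/2+\delta/2$. This is the dual of $\dot H^{\delta}(\R^2)\hookrightarrow L^{2/(1-\delta)}$, and $p\in(1,2)$ since $\delta<1$. Hölder's inequality then yields $\norm{FG}{p}\le\norm{F}{2/\delta}\norm{G}{2}$.

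Taking the supremum of $\norm{f(\cdot,x_3)}{L^{2/\delta}}$ over $x_3$ and integrating $\norm{g(\cdot,x_3)}{2}^2$ in $x_3$ gives \eqref{eq:product1}. The extension to $f\in\mixed{\infty}{2/\delta}$ and $g\in L^2$ follows by density of smooth $g$ in $L^2$, with $f$ fixed. The bilinear map is continuous in $g$, and for fixed $g$ we can approximate $f$ by mollification in $x_h$ only, which does not increase the mixed norm.

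\emph{Proof of \eqref{eq:product2}.} Argue by duality. For $h\in\dot H^{\delta}(\R^2)$,
\[
\langle fg,h\rangle=\langle g,fh\rangle,
\]
so it suffices to show that multiplication by $f$ is bounded on $\dot H^{\delta}(\R^2)$ with norm at most $C(\norm{f}{\infty}+\norm{\nabla_hf}{2})$. Here we must avoid needing a bound on $\norm{f}{\dot H^\delta}$, since $f$ is only in $L^\infty$.

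\emph{Main obstacle.} The key step is the fractional Leibniz-type bound
\[
\norm{fh}{\dot H^\delta}\lesssim\norm{f}{\infty}\norm{h}{\dot H^\delta}+\norm{\nabla f}{2}\norm{h}{\dot H^{\delta}} .
\]
I would prove it with the Gagliardo seminorm $\norm{h}{\dot H^\delta}^2\simeq\iint|h(x)-h(y)|^2|x-y|^{-2-2\delta}\dd x\dd y$, valid for $0<\delta<1$. Writing
\[
f(x)h(x)-f(y)h(y)=f(x)\bigl(h(x)-h(y)\bigr)+h(y)\bigl(f(x)-f(y)\bigr),
\]
the first piece is bounded by $\norm{f}{\infty}\norm{h}{\dot H^\delta}$. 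The second piece equals $\int|h(y)|^2\int|f(x)-f(y)|^2|x-y|^{-2-2\delta}\dd x\dd y$. I would control it in one of two ways:
\begin{itemize}
\item via Hölder with $h\in L^{2/(1-\delta)}$ (Sobolev embedding) and the bound $f\in\dot W^{\delta,2/\delta}$, which holds since $\dot H^1(\R^2)\hookrightarrow\dot W^{\delta,2/\delta}$ by the embedding with $1-2/2=\delta-2\delta/2$, i.e.\ $0=0$, so scaling matches;
\item or, more robustly, via a paraproduct decomposition $fh=T_fh+T_hf+R(f,h)$. Here $T_fh$ is bounded by $\norm{f}{\infty}\norm{h}{\dot H^\delta}$. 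Both $T_hf$ and $R(f,h)$ are bounded by $\norm{h}{L^{2/(1-\delta)}}\norm{f}{\dot W^{\delta,2/\delta}}$, or directly by $\norm{h}{\dot B^{\delta-1}_{\infty,2}}\norm{\nabla f}{2}$, using $\dot H^\delta(\R^2)\hookrightarrow\dot B^{\delta-1}_{\infty,2}$.
\end{itemize}
I expect the second route to be the cleanest; the needed paraproduct bounds are in \cite{BCD2011}.

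\emph{Remaining extensions.} Duality then defines $fg$ for $f\in L^\infty\cap\dot H^1$ and $g\in\dot H^{-\delta}$. The three-dimensional multiplication statement follows by applying \eqref{eq:product2} on each plane $x_3=\text{const}$, squaring, and integrating in $x_3$. This bounds the operator norm on $\dot H^{-\delta,0}(\R^3)$ by $\operatorname{ess\,sup}_{x_3}\bigl(\norm{f(\cdot,x_3)}{L^\infty(\R^2)}+\norm{\nabla_hf(\cdot,x_3)}{L^2(\R^2)}\bigr)$. Measurability in $x_3$ is handled by first treating smooth $f$ and then passing to the limit.
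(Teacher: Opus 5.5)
Your proposal is correct and is essentially the paper's proof: for \eqref{eq:product1} the paper uses the plane-wise dual Sobolev embedding $L^{2/(1+\delta)}(\R^2)\hookrightarrow\dot H^{-\delta}(\R^2)$, H\"older, and integration in $x_3$, and for \eqref{eq:product2} it uses duality with $\dot H^\delta(\R^2)$ and the fractional product inequality $\norm{f\phi}{\dot H^\delta}\leq C\bigl(\norm{f}{L^\infty}\norm{\phi}{\dot H^\delta}+\norm{|\nabla_h|^\delta f}{L^{2/\delta}}\norm{\phi}{L^{2/(1-\delta)}}\bigr)$ together with $\dot H^\delta\hookrightarrow L^{2/(1-\delta)}$ and $\norm{|\nabla_h|^\delta f}{L^{2/\delta}}\leq C\norm{\nabla_hf}{L^2}$, which is your first route (the paper cites the product inequality rather than proving it). If you derive it from the Gagliardo seminorm, H\"older leaves you with the $L^{2/\delta}$ norm of the square function $\bigl(\int|f(x)-f(\cdot)|^2|x-\cdot|^{-2-2\delta}\dd x\bigr)^{1/2}$; this is bounded by $\norm{|\nabla_h|^\delta f}{L^{2/\delta}}$ via Stein's characterization of Bessel-potential spaces, not by the Slobodeckij double-integral norm, so ``$\dot W^{\delta,2/\delta}$'' must be read in the Bessel-potential sense.
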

\begin{proof}
The dual horizontal Sobolev inequality \cite{Stein1970} and H\"older's inequality give, at fixed $x_3$,
\[
\norm{fg}{\dot H^{-\delta}(\R^2)}
 \leq C_\delta\norm{fg}{L^{2/(1+\delta)}(\R^2)}
 \leq C_\delta\norm{f}{L^{2/\delta}(\R^2)}\norm{g}{L^2(\R^2)}.
\]
Squaring and integrating in $x_3$ proves \eqref{eq:product1}.

For \eqref{eq:product2}, use duality with $\dot H^\delta(\R^2)$ and the fractional product inequality; see \cite{GO2014,BCD2011}. For a test function $\phi$,
\begin{align*}
\norm{f\phi}{\dot H^\delta(\R^2)}
&\leq C_\delta\Bigl(
 \norm{f}{L^\infty(\R^2)}\norm{\phi}{\dot H^\delta(\R^2)}
 +\norm{|\nabla_h|^\delta f}{L^{2/\delta}(\R^2)}
  \norm{\phi}{L^{2/(1-\delta)}(\R^2)}\Bigr)\\
&\leq C_\delta\bigl(\norm{f}{L^\infty(\R^2)}
                +\norm{\nabla_h f}{L^2(\R^2)}\bigr)
                \norm{\phi}{\dot H^\delta(\R^2)}.
\end{align*}
We used
\[
\dot H^\delta(\R^2)\hookrightarrow L^{2/(1-\delta)}(\R^2),
\qquad
\norm{|\nabla_h|^\delta f}{L^{2/\delta}(\R^2)}
 \leq C_\delta\norm{\nabla_h f}{L^2(\R^2)}.
\]
The bounded map $\phi\mapsto f\phi$ on $\dot H^\delta(\R^2)$ defines multiplication on its dual by $\langle fg,\phi\rangle=\langle g,f\phi\rangle$. This proves the plane-wise bound and its stated extension. Its three-dimensional consequence follows by integration in $x_3$ with a uniform plane-wise multiplier bound.
\end{proof}

\begin{lemma}[Finite-energy heat-flow remainder]\label{lem:common-remainder}
Let $u$ be a Fujita--Kato solution on $[0,T^*)$, with $T^*<\infty$, and fix $0<a<b<T^*$. Set
\begin{equation}\label{eq:heat-decomposition}
 U(t)=e^{(t-a)\Delta}u(a),\qquad v(t)=u(t)-U(t),\qquad
 \Omega_U=\curl U,\quad \omega_U=\Omega_U^3.
\end{equation}
For every $b<\tau<T^*$ and integer $m\geq0$,
\begin{equation}\label{eq:np-local-remainder}
 \begin{gathered}
 v\in C([b,\tau];H^m),\qquad
 \curl v\in C([b,\tau];W^{m,q})\cap C^1([b,\tau];L^q).
 \end{gathered}
\end{equation}
Moreover,
\begin{equation}\label{eq:np-basic-remainder}
 \sup_{b\leq t<T^*}\norm{v(t)}2^2+
 \int_b^{T^*}\norm{\nabla v(t)}2^2\dd t<\infty.
\end{equation}
The quantities $E,D$ formed from $v$ satisfy $E(b)<\infty$, $E\in C^1([b,\tau])$, and $D\in L^1(b,\tau)$. No $L^2$ hypothesis on the full initial velocity is required.
\end{lemma}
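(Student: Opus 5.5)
We will work with the equation satisfied by the remainder. Since $U$ solves the heat equation, $v=u-U$ satisfies on $(a,T^*)$
\[
\partial_tv-\Delta v+\nabla\pi=-\mathbb P\,\diver(u\otimes u),\qquad v(a)=0,
\]
so $v(t)=-\int_a^te^{(t-s)\Delta}\mathbb P\diver(u\otimes u)(s)\dd s$. The plan has four steps: a Duhamel estimate putting $v$ in $L^2$ at positive times, an energy identity giving \eqref{eq:np-basic-remainder}, a parabolic bootstrap for the higher regularity, and a separate vorticity Duhamel argument for $\curl v\in L^q$.

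\textbf{Step 1: $v(t)\in L^2$.} Fujita--Kato theory gives $u\in L^\infty(a,\tau;\dot H^{1/2})\cap L^2(a,\tau;\dot H^{3/2})$ for every $\tau<T^*$. Hence $u\in L^4(a,\tau;\dot H^1)\hookrightarrow L^4(a,\tau;L^6)$ and $u\otimes u\in L^2(a,\tau;L^3)$. Since $\dot H^{1/2}\hookrightarrow L^3$, we also have $u\otimes u\in L^\infty(a,\tau;L^{3/2})$. The heat kernel gives
\[
\norm{e^{(t-s)\Delta}\mathbb P\diver F}{2}\lesssim(t-s)^{-1/2-3/4(1/p-1/2)}\norm F p .
\]
With $p=3/2$ the exponent is $-3/4$, which is integrable. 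Therefore $v\in L^\infty(a,\tau;L^2)$ with a bound depending only on $\tau$.

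\textbf{Step 2: the energy identity.} Since $v\in L^\infty L^2$ and $\nabla v=\nabla u-\nabla U\in L^2(a,\tau;L^2)$, we can test the $v$-equation with $v$:
\[
\tfrac12\tfrac{d}{dt}\norm v2^2+\norm{\nabla v}2^2=-\langle u\cdot\nabla u,v\rangle .
\]
Writing $u=U+v$, the cubic term in $v$ vanishes. The remaining terms are
\[
-\langle U\cdot\nabla U,v\rangle-\langle v\cdot\nabla U,v\rangle-\langle U\cdot\nabla v,v\rangle,
\]
and the last of these is also zero. $U$ is smooth for $t\ge b>a$, with $\norm{\nabla U}\infty\lesssim(t-a)^{-3/2}\norm{u(a)}{\dot H^{1/2}}$ and $\norm{U\cdot\nabla U}2\lesssim(t-a)^{-5/4}$. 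So Gronwall on $[b,T^*)$ has integrable coefficients, because $T^*<\infty$ and $t-a\ge b-a>0$. This gives \eqref{eq:np-basic-remainder} uniformly up to $T^*$. Note that uniformity on $[b,T^*)$ requires only these $U$-bounds, not any control of $u$ near $T^*$.

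\textbf{Step 3: higher regularity.} On $[b,\tau]$ the Fujita--Kato solution is smooth, with $u\in C([b,\tau];H^m_{\mathrm{loc}})$. More precisely, $\nabla^ku\in C([b,\tau];L^2)$ for $k\ge1$ by standard parabolic smoothing of $\dot H^{1/2}$ mild solutions on compact subintervals of $(0,T^*)$. The same holds for $U$ for all $k\ge1$. Combined with Step 1, this gives $v\in C([b,\tau];H^m)$.

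\textbf{Step 4: vorticity in $L^q$ (main obstacle).} The $L^q$ bound on $\curl v$ with $q<2$ is the hardest part, because neither $\curl u$ nor $\curl U$ lies in $L^q$. I would use the vorticity Duhamel formula for $\Omega_v=\curl v$:
\[
\partial_t\Omega_v-\Delta\Omega_v=-\curl\diver(u\otimes u),\qquad \Omega_v(a)=0,
\]
so that
\[
\Omega_v(t)=-\int_a^t\nabla e^{(t-s)\Delta}\,\curl(u\otimes u)\dd s .
\]
On $[a,\tau]$ I would split $u\otimes u=U\otimes U+U\otimes v+v\otimes U+v\otimes v$ and estimate the pieces as follows.
\begin{itemize}
\item For $v\otimes v$: we have $v\in L^\infty L^2\cap L^2\dot H^1$ and $\nabla v\in L^2L^2$, so $\nabla(v\otimes v)\in L^1L^1\cap L^{4/3}L^{3/2}$. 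The heat kernel maps $L^1\to L^q$ at the integrable rate $(t-s)^{-1/2-3/2(1-1/q)}$ once $q<3/2$. To reach $3/2<q<2$, interpolate instead with $\nabla(v\otimes v)\in L^{1}(L^{3/2})$.
\item For the pieces containing $U$: use $U\in L^\infty(a,\tau;L^3)$, $U(t)\in L^p$ for every $p\ge3$, and $\nabla U\in L^3$ with the singular weight $(t-a)^{-1/2}$. The products then lie in $L^{3/2}$ with integrable time weights near $s=a$.
\end{itemize}
The worry is the $U\otimes U$ term near $s=a$. Here I would first replace $a$ by an intermediate $a'$, using smoothness of $u$ on $[a',b]$: the weights there are bounded, which removes the near-$a$ singularity from the $L^{3/2}$ bounds.

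Having $\Omega_v(b)\in L^q$, the equation for $\Omega_v$ on $[b,\tau]$ has smooth $H^m$ coefficients. Parabolic $L^q$ theory then gives $\curl v\in C([b,\tau];W^{m,q})\cap C^1([b,\tau];L^q)$.

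Finally, $E(b)<\infty$ follows from $\omega(b)\in L^q$ and \eqref{eq:Y-H1}. The regularity $E\in C^1$ follows from the $C^1_tL^q$ and $C_tH^m$ bounds, since $|\omega|^q$ is $C^1$ in $\omega$ for $q>1$. For $D\in L^1$, use \eqref{eq:G-W}-type bounds, $\norm{\nabla\omega_{q/2}}2^2\lesssim\norm{\omega}q^{q-2}\norm{\nabla\omega}{q}^{\ldots}$, which are controlled by the $W^{1,q}\cap W^{2,q}$ regularity, together with $\nabla^2v^3\in\dot H^{-\delta,0}$ via $H^m$.
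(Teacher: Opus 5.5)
Your architecture matches the paper's. You use Duhamel from $a$ with $v(a)=0$ to put $v$ in $L^2$, an energy inequality for $v$ whose Gronwall coefficients involve only $U$ (so it is uniform up to $T^*$), and a vorticity Duhamel formula with $\curl v(a)=0$.

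\textbf{Gap in Step 4.} As written, Step~4 does not give $\curl v(b)\in\Lq$. For the $v\otimes v$ piece you use only energy-class information.
\begin{itemize}
\item That information gives $\nabla(v\otimes v)\in L^1_tL^{3/2}_x$, not $L^{4/3}_tL^{3/2}_x$.
\item Convolving an $L^1$-in-time forcing with the unbounded kernel $(t-s)^{-3/2+3/(2q)}$ of $\nabla e^{(t-s)\Delta}\colon L^{3/2}\to L^q$ yields only a weak-Lebesgue function of $t$. It gives no bound at the prescribed time $b$.
\item Even $L^{4/3}$ in time would not suffice: H\"older would need the kernel in $L^4_t$, i.e.\ an exponent below $1/4$.
\item Restarting at an intermediate $a'$ is circular unless $\curl v(a')\in\Lq$ is already known.
\end{itemize}
The missing observation is that $a>0$. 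Hence $[a,b]$ is a compact positive-time interval on which the Fujita--Kato solution is uniformly smooth: $u\in L^\infty(a,b;L^\ell)$ with $\ell=2q/(2-q)>3$, and $\curl u\in L^\infty(a,b;\Ltwo)$.

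The paper writes the forcing as $\diver(u\otimes\Omega_u-\Omega_u\otimes u)$. Using $1/q=1/\ell+1/2$, this forcing lies in $L^\infty(a,b;\Lq)$, and the kernel $(b-s)^{-1/2}$ closes the estimate with no splitting into $U$ and $v$ pieces. Putting derivatives on the product in the same formula also gives $W^{m,q}$ continuity at $t=b$ itself. Parabolic smoothing started from a mere $L^q$ datum at $b$, as you propose, would not give that.

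\textbf{Gap in the bound for $D$.} Your argument for $D\in L^1(b,\tau)$ rests on an invalid inequality. For $q<2$, the quantity $\norm{\nabla\omega_{q/2}}{2}^2=(q/2)^2\int|\omega|^{q-2}|\nabla\omega|^2\dd x$ is not bounded by $\norm{\omega}{q}^{q-2}\norm{\nabla\omega}{q}^2$; \eqref{eq:G-W} goes the other way. For a counterexample, add $\eps\phi\sin(Nx_1)$ with $\eps N=1$ to a fixed bump of disjoint support. The right side stays bounded while the left side grows like $\eps^{q-2}$.

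The correct route, used in the paper, is the integration-by-parts identity
\[
\tfrac{4(q-1)}{q^2}\norm{\nabla\omega_{q/2}}{2}^2=-\int\Delta\omega\,\omega|\omega|^{q-2}\dd x\leq\norm{\Delta\omega}{q}\norm{\omega}{q}^{q-1}.
\]
This is where the $W^{2,q}$ regularity enters.

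A harmless slip: $\norm{\nabla U(t)}{\infty}\lesssim(t-a)^{-1}\norm{u(a)}{\dot H^{1/2}}$, not $(t-a)^{-3/2}$.
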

\begin{proof}
On compact positive-time intervals, Fujita--Kato smoothing gives $u\in C(W^{m,3})$ and $\curl u\in C(H^m)$ for every $m\geq0$; see \cite{FK1964,Kato1984,BCD2011}. The mild equation starting at $a$ gives
\begin{equation}\label{eq:np-velocity-duhamel}
 v(t)=-\int_a^t e^{(t-s)\Delta}\mathbb P\diver(u\otimes u)(s)\dd s,
\end{equation}
where $\mathbb P$ is the Leray projector. The heat estimate from $L^{3/2}$ to $L^2$, with one derivative, yields
\begin{equation}\label{eq:np-L2-gain}
 \norm{v(t)}2\leq C\int_a^t(t-s)^{-3/4}\norm{u(s)}3^2\dd s<\infty.
\end{equation}
Placing spatial derivatives on $u\otimes u$ gives the asserted local $H^m$ regularity and continuity. Only compact-time bounds are used at this stage.

Put $\Omega_u=\curl u$ and $\ell=2q/(2-q)>3$, so $1/q=1/\ell+1/2$. Since $v(a)=0$, the vorticity equation gives
\begin{equation}\label{eq:Duhamel}
 \curl v(t)=\int_a^t e^{(t-s)\Delta}\diver
 (u\otimes\Omega_u-\Omega_u\otimes u)(s)\dd s.
\end{equation}
We use $(A\otimes B)_{ij}=A^iB^j$ and $(\diver M)^i=\sum_j\partial_jM_{ij}$; thus the divergence equals $\Omega_u\cdot\nabla u-u\cdot\nabla\Omega_u$. Consequently,
\begin{equation}\label{eq:initial-X}
 \norm{\curl v(b)}q\leq C\int_a^b(b-s)^{-1/2}
 \norm{u(s)}\ell\norm{\Omega_u(s)}2\dd s<\infty.
\end{equation}
For higher derivatives, Leibniz' rule gives, on $[a,\tau]$,
\[
 \norm{u\otimes\Omega_u-\Omega_u\otimes u}{W^{m,q}}
 \leq C_m\sum_{|\alpha|+|\gamma|\leq m}
 \norm{\partial^\alpha u}\ell\norm{\partial^\gamma\Omega_u}2.
\]
Passing derivatives in \eqref{eq:Duhamel} to this product leaves the integrable kernel $(t-s)^{-1/2}$ and proves continuity in $W^{m,q}$. Its equation then gives $\partial_t\curl v\in C([b,\tau];L^q)$.

The fixed positive gap $b-a$ supplies all background bounds without assuming $u(a)\in L^2$. Indeed,
\begin{align}
 \norm{\nabla^mU(t)}3
 &\leq C_m(t-a)^{-m/2}\norm{u(a)}3,
 &&0\leq m\leq8,\label{eq:np-heat-L3}\\
 \norm{\nabla^kU(t)}2
 &\leq C_k(t-a)^{-(k-1/2)/2}\norm{u(a)}{\dot H^{1/2}},
 &&1\leq k\leq7.\label{eq:np-heat-L2-derivatives}
\end{align}
The first bound uses $\dot H^{1/2}\hookrightarrow L^3$; the second follows by factoring $|\xi|^{1/2}$ from the Fourier multiplier. Hence
\begin{equation}\label{eq:np-heat-coefficients}
 M:=1+\sup_{b\leq t\leq T^*+1}
 \left(\norm{U(t)}{W^{8,3}}+\norm{\nabla U(t)}{H^6}\right)
 \leq C_{a,b,T^*}\bigl(1+\norm{u(a)}{\dot H^{1/2}}\bigr)<\infty.
\end{equation}
For $|\alpha|\leq2$, these bounds imply
\begin{equation}\label{eq:np-heat-mixed-coefficients}
\begin{split}
 &\norm{\partial^\alpha U}{L^\ell}
 +\norm{\partial^\alpha U}{\mixed{\infty}{2/\delta}}
 +\norm{\partial^\alpha U}{L^\infty}
 +\norm{\nabla_h\partial^\alpha U}{\mixed{\infty}{2}}
 \leq C_qM.
\end{split}
\end{equation}
For the mixed $L^{2/\delta}_{x_h}$ norm, use the horizontal $W^{2,3}$ embedding and the vertical $W^{1,3}$ embedding. For the last norm, use the vertical $H^1$ embedding with values in $L^2_{x_h}$ and \eqref{eq:np-heat-L2-derivatives}. These bounds hold uniformly on $[b,T^*+1]$.

The remainder satisfies
\begin{equation}\label{eq:np-remainder-equation}
 \partial_tv-\Delta v+v\cdot\nabla v+U\cdot\nabla v
 +v\cdot\nabla U+U\cdot\nabla U+\nabla\pi=0.
\end{equation}
Testing by $v$ on $[b,\tau]$ and using $\diver v=\diver U=0$ yields
\begin{align}
 \frac12\frac{\mathrm d}{\mathrm dt}\norm v2^2+\norm{\nabla v}2^2
 &\leq\norm{\nabla U}\infty\norm v2^2+
orm U4^2\norm{\nabla v}2\notag\\
 &\leq\frac12\norm{\nabla v}2^2+C_M(1+\norm v2^2).
 \label{eq:np-basic-differential}
\end{align}
The $L^4$ bound for $U$ follows from its $W^{8,3}$ bound. Gronwall's inequality has constants independent of $\tau<T^*$ and proves \eqref{eq:np-basic-remainder}.

Finally, \eqref{eq:initial-X} and \eqref{eq:Y-H1} give $E(b)<\infty$. On compact intervals, the regularized $L^q$ diffusion identity \cite[Lemma 2.1]{HLLZ2019} gives
\[
 X^{2-q}\norm{\nabla\omega_{q/2}}2^2
 \leq C_q X\norm{\Delta\omega}q.
\]
Thus the first part of $D$ is locally integrable. Applying \eqref{eq:Y-H1} to the divergence-free fields $\partial_jv$ and $\partial_tv$ controls the remaining part of $D$ and the derivative of $Y^2$. Here $\partial_tv\in C([b,\tau];H^1)$ follows from \eqref{eq:np-remainder-equation}, the local smoothing, and the $L^2$ boundedness of $\mathbb P$. The $L^q$ norm-square chain rule then gives $E\in C^1([b,\tau])$.
\end{proof}
The lemma applies in particular to the $H^1$ strong solution in Theorem~\ref{thm:main}. Local smoothing constants may depend on $\tau$, whereas $M$ and the bound in \eqref{eq:np-basic-remainder} do not.

\begin{lemma}[Common perturbation estimate]\label{lem:common-perturbation}
With the notation of Lemma~\ref{lem:common-remainder}, let $E,D$ be formed from $v$. There is a nonnegative $g\in L^1(b,T^*)$ such that
\begin{equation}\label{eq:common-perturbed-energy}
 E'+c_qD\leq
 2\bigl(|\mathcal N_\omega(v)|+|\mathcal N_3(v)|\bigr)+gE.
\end{equation}
One may take
\begin{equation}\label{eq:common-g}
 g(t)=C_{q,M}\bigl(1+\norm{v(t)}2^2+\norm{\nabla v(t)}2^2\bigr).
\end{equation}
\end{lemma}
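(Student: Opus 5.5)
We derive the two evolution equations for the components of $E$ formed from $v$, starting from the remainder equation \eqref{eq:np-remainder-equation}, and split every nonlinear term into the purely $v$-part and the parts containing $U$. The $v$-parts reproduce exactly $\mathcal N_\omega(v)$ and $\mathcal N_3(v)$. The $U$-parts are bounded by $gE$ plus a small multiple of $D$, which is absorbed. All computations are legitimate on $[b,\tau]$ by the regularity \eqref{eq:np-local-remainder} and the $C^1$ property of $E$ from Lemma~\ref{lem:common-remainder}. Every constant depends only on $q$ and $M$, so it is uniform in $\tau<T^*$.

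\textbf{The $X^2$ term.} Taking the vertical component of the curl of \eqref{eq:np-remainder-equation} gives
\[
\partial_t\omega-\Delta\omega+(v+U)\cdot\nabla\omega+v\cdot\nabla\omega_U+U\cdot\nabla\omega_U=(\Omega+\Omega_U)\cdot\nabla(v^3+U^3).
\]
We test this with $X^{2-q}\omega_{q-1}$ and use $\frac{\mathrm d}{\mathrm dt}X^2=2X^{2-q}\int\partial_t\omega\,\omega_{q-1}$ together with the diffusion identity of \cite[Lemma 2.1]{HLLZ2019}. This gives dissipation $c_qW^2$, and the term $v\cdot\nabla\omega$ drops out, as does $U\cdot\nabla\omega$, by incompressibility. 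The term $\Omega\cdot\nabla v^3$ is exactly $\mathcal N_\omega(v)$. Each remaining term has the form $X^{2-q}\int F\,\omega_{q-1}$ and is bounded by $X\|F\|_q$. We estimate the pieces of $F$ as follows:
\begin{itemize}
\item $\Omega\cdot\nabla U^3$: by $\|\nabla U\|_\infty\|\Omega\|_q$. Here $\|\Omega\|_q$ is controlled by $X$ together with $\|\nabla v^3\|$-type quantities, through the reconstruction \eqref{eq:reconstruction}. Alternatively we use $\|\Omega\|_q\le\|\Omega\|_2^{\theta}\|\cdot\|$, but it is simpler to bound $\|\nabla v\|_q\le\|\nabla v\|_2^{1-\theta}\|v\|_2^{\theta}$ via $L^\ell$ coefficients. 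Concretely, $\|\nabla v\cdot \nabla U\|_q\le\|\nabla U\|_\ell\|\nabla v\|_2\le C_M\|\nabla v\|_2$.
\item $\Omega_U\cdot\nabla v^3$: bounded the same way, by $C_M\|\nabla v\|_2$.
\item $v\cdot\nabla\omega_U$ and the pure-$U$ terms: bounded by $C_M(\|v\|_2+1)$, using the $L^\ell$ bounds in \eqref{eq:np-heat-mixed-coefficients}.
\end{itemize}
Hence these contributions are at most $C_MX(1+\|v\|_2+\|\nabla v\|_2)\le gE$ with $g$ as in \eqref{eq:common-g}, using $E\ge\mathrm e\ge1$ and $X\le E$.

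\textbf{The $Y^2$ term.} We apply $\partial_k$ to the third component of \eqref{eq:np-remainder-equation} and pair with $\partial_kv^3$ in $\Hh$. The $v$-part, including the $v$-pressure $\partial_3\pi_v$, is exactly $\mathcal N_3(v)$, and the dissipation is $2Z^2$. The pressure splits linearly as $\pi=\pi_v+\pi_{vU}+\pi_U$. We integrate by parts once, writing $\langle\partial_kF,\partial_kv^3\rangle_{\Hh}\le\|F\|_{\Hh}Z$. The cross terms are then handled as follows:
\begin{itemize}
\item $U\cdot\nabla v^3$ and $v\cdot\nabla U^3$: estimated in $\Hh$ by Lemma~\ref{lem:products}. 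Estimate \eqref{eq:product1} with the $\mixed{\infty}{2/\delta}$ coefficient bound gives $C_M\|\nabla v\|_2$ and $C_M\|v\|_2$ respectively.
\item The cross pressure $\partial_3\pi_{vU}$: this is a zeroth-order Riesz transform of $\partial(v\otimes U)$. Since Riesz transforms commute with $|\nabla_h|^{-\delta}$, we again use \eqref{eq:product1}.
\item The pure-$U$ terms: bounded by $C_M$.
\end{itemize}
This yields $C_M(1+\|v\|_2+\|\nabla v\|_2)Z\le \tfrac12 Z^2+gE$.

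\textbf{Main obstacle.} The main obstacle is the mixed pairing, in the $X^2$ equation, between $\omega_{q-1}$ and terms involving the full gradient $\nabla v$ rather than only $\omega$ and $v^3$. The plan is to avoid ever needing $D$ there and to use instead the kinetic dissipation $\|\nabla v\|_2^2$, which is integrable by \eqref{eq:np-basic-remainder}. This is exactly why $g$ contains $\|\nabla v\|_2^2$. The same device handles the $\Hh$ pairings without the small-$\xi_h$ issue, since \eqref{eq:product1} only requires $L^2$ on the rough factor. Summing the two estimates, absorbing $\tfrac12Z^2$, and multiplying by $2$ gives \eqref{eq:common-perturbed-energy} with $g\in L^1(b,T^*)$.
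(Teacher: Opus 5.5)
Your proposal is correct. The vorticity half follows the paper's proof. You use the same equation for $\omega$ with the error $f_\omega$ and the same test by $X^{2-q}\omega_{q-1}$. You then apply H\"older with $1/q=1/\ell+1/2$, putting the $v$ or $\nabla v$ factor in $L^2$ and the heat coefficient in $L^\ell$.

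Two clean-ups are needed in that part:
\begin{itemize}
\item Delete the exploratory sentences in your first bullet. $\|\Omega\|_q$ is not controlled by $X$ and $Y$, because $\partial_3v^h$ involves $\partial_3\omega$ and $\partial_3^2v^3$ through horizontal potentials. Also, an interpolation $\|\nabla v\|_q\lesssim\|\nabla v\|_2^{1-\theta}\|v\|_2^{\theta}$ with $q<2$ is false on $\R^3$. Only the concrete bound $\|\nabla U\|_\ell\|\nabla v\|_2$ is needed.
\item For the pure heat products, put a differentiated heat factor in $L^2$ via \eqref{eq:np-heat-L2-derivatives}, since $U$ itself need not lie in $L^2$. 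This is harmless because $M$ controls those norms.
\end{itemize}

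The third-component half takes a slightly different route from the paper: the difference is where $\partial_k$ sits.
\begin{itemize}
\item \textbf{The paper's route.} It keeps $\partial_k$ on the error terms. It estimates $\partial_k(U\cdot\nabla v^3)$ in $\Hh$ by $C(Z+Y)$ using the multiplier bound \eqref{eq:product2}. That bound is needed there because $\nabla\partial_kv^3$ is controlled only in $\Hh$, not in $L^2$. The other errors and $\partial_k\partial_3\pi_1$ are handled by \eqref{eq:product1}. Everything is then paired with $\partial_kv^3$, whose norm is at most $Y$, which gives $\eps Z^2+C(Y^2+1+\|v\|_2^2+\|\nabla v\|_2^2)$.
\item \textbf{Your route.} You move $\partial_k$ onto $\partial_kv^3$. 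Each error $F$ (the transport term, the forcing term, the cross pressure $\partial_3\pi_{vU}$ as an order-zero transform, and the pure heat terms) is then needed only in $\Hh$ itself. There \eqref{eq:product1} with $v$ or $\nabla v$ in $L^2$ suffices. You pair with $\|\partial_k^2v^3\|_{\Hh}\le Z$.
\end{itemize}
Your route avoids \eqref{eq:product2} altogether, uses fewer derivatives of $U$, and produces no $Y^2$ term. The price is that the whole error is absorbed into $Z^2$ plus the kinetic dissipation $\|\nabla v\|_2^2$, which is already part of $g$. Both routes yield \eqref{eq:common-perturbed-energy} with $g$ of the form \eqref{eq:common-g}.
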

\begin{proof}
Subtracting the heat equation from the vorticity equation gives
\begin{equation}\label{eq:np-perturbed-scalar}
 (\partial_t+u\cdot\nabla-\Delta)\omega
   =(\curl v)\cdot\nabla v^3+f_\omega,
\end{equation}
where
\begin{equation}\label{eq:np-scalar-errors}
 f_\omega=(\curl v)\cdot\nabla U^3
 +\Omega_U\cdot\nabla(v^3+U^3)
 -(v+U)\cdot\nabla\omega_U.
\end{equation}
The coefficient bounds \eqref{eq:np-heat-mixed-coefficients} and $1/q=1/\ell+1/2$ imply
\begin{equation}\label{eq:np-scalar-error-bound}
 \norm{f_\omega}q\leq C_{q,M}(1+\norm v2+\norm{\nabla v}2).
\end{equation}
Each mixed product pairs an $L^2$ factor from $v$ or $\nabla v$ with an $L^\ell$ heat coefficient. For pure heat products, one differentiated factor is in $L^2$ by \eqref{eq:np-heat-L2-derivatives}. Testing \eqref{eq:np-perturbed-scalar} by $\omega_{q-1}$ and multiplying by $X^{2-q}$ gives the principal contribution $\mathcal N_\omega(v)$ and the error
\begin{equation}\label{eq:np-scalar-tested}
 X\norm{f_\omega}q\leq C X^2+
 C_{q,M}(1+\norm v2^2+\norm{\nabla v}2^2).
\end{equation}
The transport cancels because $u$ is divergence free. The test is justified by regularization and Lemma~\ref{lem:common-remainder}.

For the third component, split $\pi=\pi_v+\pi_1$. Expanding the pressure source gives
\begin{equation}\label{eq:np-pressure-error}
 -\Delta\pi_1=2\sum_{i,j}\partial_i v^j\partial_jU^i
                 +\sum_{i,j}\partial_iU^j\partial_jU^i,
\end{equation}
and the remainder equation is
\begin{equation}\label{eq:np-third-error-equation}
 \partial_tv^3-\Delta v^3+v\cdot\nabla v^3+\partial_3\pi_v
 =-U\cdot\nabla v^3-(v+U)\cdot\nabla U^3-\partial_3\pi_1.
\end{equation}
For $k=1,2,3$, Lemma~\ref{lem:products} gives
\begin{align}
 \norm{\partial_k(U\cdot\nabla v^3)}{\Hh}
 &\leq C_{q,M}(Z+Y),\label{eq:np-U-transport}\\
 \norm{\partial_k((v+U)\cdot\nabla U^3)}{\Hh}
 &\leq C_{q,M}(1+\norm v2+\norm{\nabla v}2),\label{eq:np-U-force}\\
 \norm{\partial_k\partial_3\pi_1}{\Hh}
 &\leq C_{q,M}(1+\norm{\nabla v}2).\label{eq:np-U-pressure}
\end{align}
Indeed, $U$ and $\partial_kU$ are multipliers on $\dot H^{-\delta,0}$ by \eqref{eq:product2}; this proves \eqref{eq:np-U-transport}. After differentiating $(v+U)\cdot\nabla U^3$, estimate \eqref{eq:product1} applies to the factors containing $v$ or $\nabla v$, and differentiated heat factors control the remaining terms. For the pressure, the bounded multiplier $\partial_k\partial_3(-\Delta)^{-1}$ commutes with $|\nabla_h|^{-\delta}$; apply \eqref{eq:product1} to each product in \eqref{eq:np-pressure-error}.

Pairing the differentiated equation with $\partial_kv^3$ in $\dot H^{-\delta,0}$ and summing gives the principal term $-\mathcal N_3(v)$. The additional pairings satisfy
\begin{equation}\label{eq:np-errors-paired}
 |\text{additional pairings}|
 \leq\eps Z^2+C_{q,M,\eps}
       (Y^2+1+\norm v2^2+\norm{\nabla v}2^2).
\end{equation}
Adding the two energy identities, choosing $\eps$ sufficiently small, and using $E\geq\mathrm e$ proves \eqref{eq:common-perturbed-energy}. The coefficient in \eqref{eq:common-g} is integrable by \eqref{eq:np-basic-remainder}.
\end{proof}

\subsection{Full-gradient estimates and terminal continuation}\label{subsec:continuation-tools}
\begin{lemma}[Differentiated energy identity in the homogeneous class]
\label{lem:homogeneous-gradient-identity}
Let $J\Subset(0,T^*)$ and let $u$ be a divergence-free solution of \eqref{eq:NS}, with its Riesz pressure, smooth on $J$, such that
\[
 u\in C(J;L^3(\R^3)),\qquad
 \nabla u\in C(J;L^2(\R^3)),\qquad
 \Delta u\in L^2(J;L^2(\R^3)).
\]
Then the identity
\begin{equation}\label{eq:full-identity}
\frac12\frac{\mathrm d}{\mathrm dt}\norm{\nabla u}{\Ltwo}^2
 +\norm{\Delta u}{\Ltwo}^2
 =-\sum_{i,j,k=1}^3\int_{\R^3}
      \partial_ku^i\,\partial_i u^j\,\partial_ku^j\dd x.
\end{equation}
holds in the time-integrated sense on $J$, and hence almost everywhere in time. No $L^2$ bound for $u$ itself is required.
\end{lemma}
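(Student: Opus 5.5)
The plan is to derive \eqref{eq:full-identity} by testing the momentum equation against $-\Delta u$, working only with quantities that live in $L^2$. Writing $\nabla\pi$ for the Riesz pressure, $\pi=(-\Delta)^{-1}\partial_i\partial_j(u^iu^j)$, we first check that every term of
\[
 \partial_tu-\Delta u+u\cdot\nabla u+\nabla\pi=0
\]
is in $L^2$ on $J$. The hypotheses give $\Delta u\in L^2(J;L^2)$. For the convection term, H\"older's inequality and Sobolev embedding give
\[
 \norm{u\cdot\nabla u}{L^2}\leq\norm u{L^6}\norm{\nabla u}{L^3}\lesssim\norm{\nabla u}{L^2}\norm{\nabla u}{L^3}.
\]
Because $u$ is smooth on $J$, $\nabla u$ lies in $L^2\cap\dot H^1$ there, so $\nabla u\in L^3$. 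Here $\norm u6\lesssim\norm{\nabla u}2$ holds because $u\in L^3$, which excludes constant modes. Since $\nabla\pi=-(I-\mathbb P)(u\cdot\nabla u)$, the pressure gradient is also in $L^2$. It follows that $\partial_tu\in L^2(J;L^2)$, and in fact $\partial_t u=\mathbb P(\Delta u-u\cdot\nabla u)$.

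The core step is the Lions--Magenes lemma for the gradient. Since $\nabla u\in L^2(J;\dot H^1)$ and $\partial_t\nabla u=\nabla\partial_tu\in L^2(J;\dot H^{-1})$, the map $t\mapsto\norm{\nabla u(t)}2^2$ is absolutely continuous, with
\[
 \frac{\mathrm d}{\mathrm dt}\frac12\norm{\nabla u}2^2=\langle\partial_tu,-\Delta u\rangle.
\]
We would justify this through a mollification in time, or equivalently by applying a Fourier cutoff $\dot S_N$ and letting $N\to\infty$. No $L^2$ bound on $u$ itself enters: only $\nabla u$ and $\Delta u$ are paired, and the frequency truncation of $u$ is harmless because we only ever differentiate it.

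Substituting the equation, the pairing $\langle\nabla\pi,\Delta u\rangle$ vanishes because $\Delta u$ is divergence free and $\nabla\pi$ is an $L^2$ gradient. This leaves
\[
 \frac12\frac{\mathrm d}{\mathrm dt}\norm{\nabla u}2^2+\norm{\Delta u}2^2=\langle u\cdot\nabla u,\Delta u\rangle.
\]
It remains to identify the right-hand side with the trilinear expression in \eqref{eq:full-identity} by integrating by parts in $x_k$. This produces
\[
 -\int\partial_ku^i\partial_iu^j\partial_ku^j\dd x-\int u^i\partial_i\partial_ku^j\,\partial_ku^j\dd x.
\]
The second integral equals $-\frac12\int u\cdot\nabla|\nabla u|^2\dd x$ and vanishes by incompressibility.

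The main obstacle is justifying these integrations by parts on $\R^3$ without decay of $u$ in $L^2$. We would handle it with a cutoff $\phi_R(x)=\phi(x/R)$. The boundary terms are bounded by $R^{-1}\int_{R<|x|<2R}|u|\,|\nabla u|^2\dd x$, which we control via H\"older's inequality by
\[
 \norm u{L^3(|x|>R)}\norm{\nabla u}{L^3}^2\to0.
\]
Alternatively, we would approximate $u(t)$ by $\dot S_N$-truncations in $C_c^\infty$-convergent fashion in $\dot W^{1,2}\cap\dot W^{1,3}\cap L^3$, using the trilinear continuity $L^2\times L^3\times L^6$ (respectively $\dot H^1\cap\dot W^{1,3}$) to pass to the limit. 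Integrating in time over subintervals of $J$ then gives the time-integrated form, and hence the identity almost everywhere.
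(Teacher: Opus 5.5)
Your argument is correct, but it is organized differently from the paper's. The paper differentiates the equation in $x_k$, multiplies by $\chi_R^2\partial_ku^j$, and lets $R\to\infty$ in the time-integrated identity. It therefore localizes every term, including the time derivative, and must show that three cutoff errors vanish:
\begin{itemize}
\item the transport tail of $|u|\,|\nabla u|^2$;
\item the diffusion tail of $|\nabla u|\,|\nabla^2u|$;
\item the pressure tail of $|\nabla\pi|\,|\nabla u|$, where it uses $\nabla\pi\in L^{3/2}$ (Calder\'on--Zygmund with $u\in L^6$, $\nabla u\in L^2$) against $\nabla u\in L^3$.
\end{itemize}

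You instead work globally in $L^2$. From $\norm{u\cdot\nabla u}{2}\lesssim\norm{\nabla u}{2}^{3/2}\norm{\Delta u}{2}^{1/2}$, you obtain $u\cdot\nabla u\in L^2(J;L^2)$. Hence $\nabla\pi=-(I-\mathbb P)(u\cdot\nabla u)$ and $\partial_tu$ also lie in $L^2(J;L^2)$. The time derivative is then handled by a Lions--Magenes (or Fourier-truncation) argument. The pressure drops out exactly, by orthogonality of an $L^2$ gradient to the divergence-free field $\Delta u$. A spatial cutoff is needed only for the cubic integration by parts, where the error $R^{-1}\norm{u}{L^3(|x|>R)}\norm{\nabla u}{3}^2$ vanishes.

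What each route buys:
\begin{itemize}
\item Yours avoids localizing the pressure and diffusion terms altogether, and it yields $\norm{\Delta u}{2}^2$ directly. The paper's diffusion term naturally produces $\norm{\nabla^2u}{2}^2$.
\item The paper's is more elementary and needs no abstract time-derivative lemma.
\end{itemize}

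One small correction: the Gelfand triple should be $H^1\subset L^2\subset H^{-1}$ rather than the homogeneous spaces you wrote. This is legitimate here, since $\nabla u\in L^\infty(J;L^2)\cap L^2(J;\dot H^1)\subset L^2(J;H^1)$ and $\nabla\partial_tu\in L^2(J;H^{-1})$.
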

\begin{proof}
Let $\chi_R(x)=\chi(x/R)$, where $\chi\in C_c^\infty(\R^3)$ equals one on the unit ball and is supported in the ball of radius two. Differentiate the equation by $\partial_k$, multiply the $j$th component by $\chi_R^2\partial_ku^j$, and sum over $j,k$. The differentiated transport equation is
\[
 \partial_t\partial_ku^j+u^i\partial_i\partial_ku^j
 +\partial_ku^i\partial_i u^j-\Delta\partial_ku^j
 +\partial_k\partial_j\pi=0.
\]
All terms are integrable on compact time intervals. Indeed,
\[
 \norm{\nabla u}{3}^2
 \leq C\norm{\nabla u}{2}\norm{\Delta u}{2},
 \qquad
 \norm u6\leq C\norm{\nabla u}2,
\]
and, more explicitly,
\[
 \int_J\norm{\nabla u}3^3\dd t
 \leq C|J|^{1/4}\norm{\nabla u}{L^\infty(J;L^2)}^{3/2}
                  \norm{\Delta u}{L^2(J;L^2)}^{3/2}<\infty.
\]
The remaining transport and diffusion products are likewise integrable by H\"older's inequality. Moreover,
\[
 \nabla\pi=\nabla(-\Delta)^{-1}\diver(u\cdot\nabla u),
\]
so Calder\'on--Zygmund estimates, $u\in L^6$, and $\nabla u\in L^2$ give
$\nabla\pi\in L^{3/2}$ locally in time, while $\nabla u\in L^3$.

The transport term equals a boundary integral supported in
$\{R<|x|<2R\}$:
\[
 \frac12\int u\cdot\nabla(\chi_R^2)|\nabla u|^2\dd x,
\]
and tends to zero as $R\to\infty$. The boundary terms generated by the diffusion and pressure are bounded, respectively, by tails of
$|\nabla u||\nabla^2u|$ and $|\nabla\pi||\nabla u|$, multiplied by $C/R$, and therefore also vanish. Passing to the limit gives
\[
 \frac12\frac{\mathrm d}{\mathrm dt}\norm{\nabla u}2^2
 +\norm{\Delta u}2^2
 =-\sum_{i,j,k}\int
   \partial_ku^i\,\partial_i u^j\,\partial_ku^j\dd x
\]
in the time-integrated sense. The same cutoff argument justifies the subsequent integrations by parts in the cubic term.
\end{proof}

\begin{lemma}[Full-gradient estimate]\label{lem:full-gradient}
Under the hypotheses of Lemma~\ref{lem:homogeneous-gradient-identity}, put $\omega_u=\partial_1u^2-\partial_2u^1$. If $\omega_u$ and $\nabla u^3$ belong to $L^2(J;L^3)$, then
\begin{equation}\label{eq:full-energy}
 \frac{\mathrm d}{\mathrm dt}\norm{\nabla u}2^2+\norm{\Delta u}2^2
 \leq C\bigl(\norm{\omega_u}3^2+\norm{\nabla u^3}3^2\bigr)
                 \norm{\nabla u}2^2.
\end{equation}
In particular, if these coefficient norms are integrable up to a finite terminal time, Gronwall's inequality bounds the full gradient and its dissipation up to that time.
\end{lemma}
\begin{proof}
Every cubic monomial on the right contains a factor from either $\nabla_hu^h$ or $\nabla u^3$. Indeed, if $i,j\in\{1,2\}$, then $\partial_i u^j$ is horizontal. If $j=3$, a differentiated $u^3$ appears explicitly. If $i=3$ and $j\in\{1,2\}$, then $\partial_ku^3$ is explicit. Thus
\begin{equation}\label{eq:cubic-pointwise}
\left|\sum_{i,j,k=1}^3
 \partial_ku^i\,\partial_i u^j\,\partial_ku^j\right|
 \leq C\bigl(|\nabla_hu^h|+|\nabla u^3|\bigr)|\nabla u|^2.
\end{equation}
The reconstruction \eqref{eq:intro-reconstruction} and horizontal Riesz-transform boundedness \cite{Stein1970}, applied plane by plane, imply
\begin{equation}\label{eq:horizontal-L3}
\norm{\nabla_hu^h}{\Lthree}
 \leq C\bigl(\norm{\omega_u}{\Lthree}
                  +\norm{\partial_3u^3}{\Lthree}\bigr).
\end{equation}
The differentiated reconstruction is a horizontal order-zero Fourier multiplier. To justify it in the homogeneous setting, first note that $\nabla u\in L^2$ and $u\in L^3$ rule out a distribution supported on $\{\xi_h=0\}$ in the differentiated field. Thus the identity holds for the $L^2$ derivatives. Whenever $\omega_u,\partial_3u^3\in L^3$, horizontal Riesz-transform boundedness and Fubini's theorem give \eqref{eq:horizontal-L3}. No estimate of the undifferentiated horizontal potential in $L^3$ is needed.
Combining \eqref{eq:cubic-pointwise}--\eqref{eq:horizontal-L3} with \eqref{eq:full-identity}, using
$\norm{\nabla u}{L^6(\R^3)}\leq C\norm{\Delta u}{\Ltwo}$,
and then applying Young's inequality proves \eqref{eq:full-energy}.
\end{proof}

\begin{lemma}[Strong terminal trace and continuation]\label{lem:terminal-trace}
Let $a<b<T<\infty$, let $h\in\dot H^{1/2}(\R^3)$ be divergence free, and set $U(t)=e^{(t-a)\Delta}h$ for $t>a$. Suppose that a divergence-free field $v$, smooth on compact subintervals of $[b,T)$, solves
\begin{equation}\label{eq:trace-remainder}
 \partial_tv-\Delta v+\mathbb P\bigl((v+U)\cdot\nabla(v+U)\bigr)=0
\end{equation}
and satisfies
\begin{equation}\label{eq:trace-hypotheses}
 v\in L^\infty(b,T;H^1)\cap L^2(b,T;H^2),\qquad v(b)\in H^1.
\end{equation}
Then $v$ has a strong limit in $H^1$ as $t\uparrow T$ and extends as a strong solution of \eqref{eq:trace-remainder} beyond $T$. Consequently, $u=U+v$ extends in the Fujita--Kato class. If $h\in L^2$ as well, the extension is also a finite-energy $H^1$ strong solution.
\end{lemma}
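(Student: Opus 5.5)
The argument has three parts: the bounds \eqref{eq:trace-hypotheses} first give a strong $H^1$ limit $v(T)$ of $v(t)$ as $t\uparrow T$; we then restart \eqref{eq:trace-remainder} from $v(T)$ by a contraction argument; and finally uniqueness identifies $U+v$ with the Fujita--Kato solution.

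\emph{Step 1: the terminal limit.} On $[b,T)$, \eqref{eq:trace-remainder} gives
\[
\partial_tv=\Delta v-\mathbb P\bigl(v\cdot\nabla v+U\cdot\nabla v+v\cdot\nabla U+U\cdot\nabla U\bigr).
\]
We show $\partial_tv\in L^2(b,T;L^2)$. First, $\Delta v\in L^2L^2$ by hypothesis. Next, $\norm{v\cdot\nabla v}2\leq\norm v\infty\norm{\nabla v}2$, and Agmon's inequality gives $\norm v\infty\leq C\norm v{H^1}^{1/2}\norm v{H^2}^{1/2}$, so this term lies in $L^4(b,T;L^2)$. The terms containing $U$ are bounded by the uniform coefficient bounds \eqref{eq:np-heat-coefficients}--\eqref{eq:np-heat-mixed-coefficients}, which hold on $[b,T+1]$ since $b-a>0$. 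For instance, $\norm{U\cdot\nabla U}2\leq\norm U\infty\norm{\nabla U}2$ is bounded by \eqref{eq:np-heat-L2-derivatives}. Hence $v\in L^2(b,T;H^2)$ with $\partial_tv\in L^2(b,T;L^2)$. The Lions--Magenes lemma then gives $v\in C([b,T];H^1)$ after modification on a null set, and smoothness on compact subintervals identifies this representative with $v$. This defines $v(T)=\lim_{t\uparrow T}v(t)$ strongly in $H^1$.

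\emph{Step 2: continuation.} We solve \eqref{eq:trace-remainder} on $[T,T+\eta]$ with data $v(T)\in H^1$. We run a fixed point for
\[
v(t)=e^{(t-T)\Delta}v(T)-\int_T^te^{(t-s)\Delta}\mathbb P\diver\bigl((v+U)\otimes(v+U)\bigr)\dd s
\]
in $C([T,T+\eta];H^1)\cap L^2(T,T+\eta;H^2)$. The quadratic term $v\otimes v$ is handled exactly as in the standard $H^1$ strong theory of \cite{FK1964,Kato1984}. The linear terms $U\otimes v$ and $v\otimes U$ have smooth bounded coefficients. The forcing $U\otimes U$ has $\diver(U\otimes U)\in L^\infty(L^2)$. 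For small $\eta$, depending on $\norm{v(T)}{H^1}$ and $M$, this gives a contraction. Parabolic smoothing makes the solution smooth for $t>T$, and it glues continuously in $H^1$ at $t=T$.

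\emph{Step 3: conclusion.} Since $U\in C([T,T+\eta];\dot H^{1/2})$ and $H^1\hookrightarrow\dot H^{1/2}$ on the finite-energy part, the field $u=U+v$ lies in $C(\dot H^{1/2})\cap L^2_{\mathrm{loc}}\dot H^{3/2}$ near $T$, and it is a mild solution. Weak--strong uniqueness in the Fujita--Kato class identifies it with the continuation of the maximal solution, so $T^*>T$ whenever $T=T^*$. If $h\in L^2$, then $U\in C(H^1)$ on $[b,\infty)$, and the extension is a finite-energy $H^1$ strong solution.

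The main obstacle is Step 1, and specifically the justification near $T$. The argument needs the $v\cdot\nabla v$ estimate to be uniform up to $T$, which Agmon's inequality supplies. It also needs the $U$-coefficient bounds to hold uniformly on $[b,T]$ without any $L^2$ control of $h$; this is exactly where the positive gap $b-a$ in \eqref{eq:np-heat-coefficients} is used.
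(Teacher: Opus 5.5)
Your proposal is correct and follows the paper's proof. The paper takes the same three steps: it derives $\partial_tv\in L^2(b,T;L^2)$ from the equation, using $\norm{v}{6}\norm{\nabla v}{3}\leq C\norm{v}{H^1}\norm{v}{H^2}$ where you use Agmon's inequality; it proves the strong $H^1$ trace at $T$ by hand with high-frequency cutoffs $(I-S_N)v$, which is exactly the Lions--Magenes statement you cite; and it restarts the perturbed equation through the local $H^1$ theory with the smooth heat-flow coefficients, then places $U+v$ in the Fujita--Kato class.
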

\begin{proof}
The positive gap $b-a$ and the heat estimates imply, on $[b,T+1]$,
\[
 \norm{U}{\infty}+\norm{\nabla U}{\infty}
 +\norm{\nabla U}{2}+\norm{U\cdot\nabla U}{2}\leq C_{h,a,b,T}.
\]
For the nonlinear term, Sobolev embedding gives
\[
 \norm{v\cdot\nabla v}{2}
 \leq\norm{v}{6}\norm{\nabla v}{3}
 \leq C\norm{v}{H^1}\norm{v}{H^2}.
\]
The mixed terms are bounded by $C_{h,a,b,T}\norm{v}{H^1}$. Since $\mathbb P$ is bounded on $L^2$, \eqref{eq:trace-remainder} and \eqref{eq:trace-hypotheses} imply
\begin{equation}\label{eq:time-derivative-L2}
 \partial_tv\in L^2(b,T;L^2).
\end{equation}
Here the finite-energy object is $v$, not necessarily $u$.

We give the terminal trace argument explicitly. Let $S_N$ be a smooth inhomogeneous Fourier cutoff to $|\xi|\lesssim2^N$, and put $w_N=(I-S_N)v$. Since the cutoff commutes with $\partial_t$, one has, in the sense of distributions in time,
\[
 \frac{\mathrm d}{\mathrm dt}\norm{w_N}{H^1}^2
 =2\big\langle\partial_tw_N,(1-\Delta)w_N\big\rangle_{L^2}.
\]
Integration and Cauchy--Schwarz therefore give
\begin{equation}\label{eq:trace-tail}
 \begin{split}
 \sup_{b\leq t<T}\norm{w_N(t)}{H^1}^2
 &\leq\norm{w_N(b)}{H^1}^2\\
 &\quad+2\norm{(I-S_N)\partial_tv}{L^2(b,T;L^2)}
             \norm{(I-S_N)v}{L^2(b,T;H^2)}.
 \end{split}
\end{equation}
The three quantities on the right tend to zero as $N\to\infty$: the first by $v(b)\in H^1$, and the other two by strong convergence of the Fourier cutoffs in the indicated Bochner spaces. For each fixed $N$, the finite-frequency estimate
$\norm{S_Nf}{H^1}\leq C_N\norm f2$ and \eqref{eq:time-derivative-L2} imply
$S_Nv\in H^1(b,T;H^1)$, hence $S_Nv$ extends continuously to $[b,T]$ in $H^1$. The uniform high-frequency estimate \eqref{eq:trace-tail} shows that these continuous low-frequency parts form a Cauchy sequence in $C([b,T];H^1)$. Thus $v$ has a strong $H^1$ limit at $T$.

Starting with $v(T)$, the local $H^1$ construction applies to \eqref{eq:trace-remainder} with its prescribed smooth coefficients. The usual Galerkin construction or mild iteration is unchanged by these lower-order terms. In particular, its a priori estimate is
\[
 \frac{\mathrm d}{\mathrm dt}\bigl(1+\norm{v}{H^1}^2\bigr)
 +c\norm{\nabla v}{H^1}^2
 \leq C_{h,a,b,T}\bigl(1+\norm{v}{H^1}^2\bigr)^3.
\]
The self-interaction is bounded by
$\norm{v}{6}\norm{\nabla v}{3}\norm{\Delta v}{2}$; all terms involving $U$ use the heat bounds above and their differentiated versions. The corresponding difference estimate gives strong uniqueness. This gives a local continuation, as in the standard $H^1$ theory \cite{FK1964,Giga1986,BCD2011}. Since $U$ is a heat flow from $\dot H^{1/2}$ and $H^1\hookrightarrow\dot H^{1/2}$, the sum lies in the Fujita--Kato class and agrees with the original solution. When $h\in L^2$, $U$ also stays in $H^1$ for $t\geq b$.
\end{proof}

\section{The endpoint Besov criterion}\label{sec:endpoint}
This section proves Theorem~\ref{thm:main}. Choose $q<2$ sufficiently close to $2$ for Lemma~\ref{lem:HLLZ} and keep $q,\delta$ fixed throughout. We first establish the spatial estimates, then use the common perturbation lemma and the endpoint frequency decomposition.

\subsection{Coupled vorticity and third-component estimates}\label{subsec:endpoint-coupled}
\paragraph{Horizontal trace and potential estimates.}
\begin{lemma}\label{lem:trace}
For every $f\in\dot H^{3/2}(\R^3)$,
\begin{equation}\label{eq:trace}
\norm{\nabla_hf}{\mixed{\infty}{2}}
 \leq C\norm{f}{\Hs{3/2}}.
\end{equation}
\end{lemma}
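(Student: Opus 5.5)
We prove \eqref{eq:trace} on the Fourier side, reducing it to a one-dimensional trace inequality in the vertical variable that carries a horizontal frequency weight. By density it suffices to treat Schwartz $f$ whose Fourier transform vanishes near the origin, since then all quantities are finite. We then extend to $\dot H^{3/2}$ by completeness of the mixed space $\mixed{\infty}{2}$, where $L^\infty$ is taken in $x_3$ and $L^2$ in $x_h$.

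Fix $x_3$ and let $\widetilde f(\xi_h,x_3)$ denote the partial Fourier transform of $f$ in $x_h$. Then
\[
\widetilde f(\xi_h,x_3)=(2\pi)^{-1/2}\int_\R e^{ix_3\xi_3}\widehat f(\xi_h,\xi_3)\dd\xi_3 .
\]
By horizontal Plancherel,
\[
\norm{\nabla_hf(\cdot,x_3)}{L^2(\R^2)}^2=\int_{\R^2}|\xi_h|^2|\widetilde f(\xi_h,x_3)|^2\dd\xi_h .
\]
For each fixed $\xi_h\neq0$ we apply Cauchy--Schwarz in $\xi_3$ with the weight $|\xi|^{3}=(|\xi_h|^2+\xi_3^2)^{3/2}$:
\[
|\widetilde f(\xi_h,x_3)|^2\leq\frac1{2\pi}\int_\R\frac{\dd\xi_3}{(|\xi_h|^2+\xi_3^2)^{3/2}}
\int_\R|\xi|^{3}|\widehat f(\xi_h,\xi_3)|^2\dd\xi_3 .
\]
The key computation is the scaling substitution $\xi_3=|\xi_h|\eta$, which gives
\[
\int_\R(|\xi_h|^2+\xi_3^2)^{-3/2}\dd\xi_3=|\xi_h|^{-2}\int_\R(1+\eta^2)^{-3/2}\dd\eta=2|\xi_h|^{-2}.
\]
This factor exactly cancels the weight $|\xi_h|^2$ coming from $\nabla_h$. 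Multiplying by $|\xi_h|^2$ and integrating over $\xi_h$ therefore yields
\[
\norm{\nabla_hf(\cdot,x_3)}{L^2(\R^2)}^2\leq\frac1\pi\norm{f}{\Hs{3/2}}^2,
\]
uniformly in $x_3$. Taking the supremum over $x_3$ proves \eqref{eq:trace} with $C=\pi^{-1/2}$.

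The only delicate point is the step where the $\xi_3$-integral produces exactly $|\xi_h|^{-2}$. This is what makes the estimate logarithm-free: the analogous bound for $f$ itself in $L^\infty_{x_3}L^2_{x_h}$ by $\dot H^{1}$ would already be borderline, whereas here the extra horizontal derivative matches the extra half vertical derivative exactly.

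For the density and extension step, note that the estimate for Schwartz functions shows that $\nabla_h f(\cdot,x_3)$ depends continuously on $x_3$ with values in $L^2(\R^2)$. Consequently the map $f\mapsto\nabla_hf$ extends by continuity to a bounded operator from $\dot H^{3/2}$ into $C_b(\R;L^2(\R^2))\subset\mixed{\infty}{2}$. This extension agrees with the distributional horizontal gradient, because $\dot H^{3/2}$ is embedded in the tempered distributions modulo polynomials, and the functions whose Fourier transform vanishes near the origin are dense in $\dot H^{3/2}$.
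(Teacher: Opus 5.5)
Your proof is correct and is essentially the paper's argument: horizontal Plancherel at fixed $x_3$, Cauchy--Schwarz in $\xi_3$ against the weight $(|\xi_h|^2+\xi_3^2)^{3/2}$, the scaling identity $\int_\R(|\xi_h|^2+\xi_3^2)^{-3/2}\dd\xi_3=2|\xi_h|^{-2}$, and density; your explicit constant $\pi^{-1/2}$ and the extension into $C_b(\R;L^2(\R^2))$ only add detail. There is one slip, and it is in your heuristic aside rather than the proof: the scale-invariant analogue for $f$ itself bounds $\norm{f}{\mixed{\infty}{2}}$ by $\norm{f}{\Hs{1/2}}$, not by $\dot H^1$, and that bound fails because $\int_\R(|\xi_h|^2+\xi_3^2)^{-1/2}\dd\xi_3$ diverges logarithmically.
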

\begin{proof}
We first take $f$ to be a Schwartz function. Fix $x_3\in\R$. By horizontal Plancherel and weighted Cauchy--Schwarz in $\xi_3$,
\begin{align*}
\norm{\nabla_hf(\cdot,x_3)}{L^2(\R^2)}^2
&\leq C\int_{\R^2}|\xi_h|^2
       \left(\int_\R|\widehat f(\xi_h,\xi_3)|\dd\xi_3\right)^2\dd\xi_h\\
&\leq C\int_{\R^2}
 \left[\int_\R(|\xi_h|^2+\xi_3^2)^{3/2}
            |\widehat f(\xi_h,\xi_3)|^2\dd\xi_3\right]\\
&\hspace{4.5em}\times
 \left[\int_\R\frac{|\xi_h|^2}{(|\xi_h|^2+\xi_3^2)^{3/2}}\dd\xi_3\right]\dd\xi_h.
\end{align*}
For $\xi_h\neq0$, a change of variable gives
\[
\int_\R\frac{|\xi_h|^2}{(|\xi_h|^2+\xi_3^2)^{3/2}}\dd\xi_3
 =\int_\R\frac{\dd s}{(1+s^2)^{3/2}}=2.
\]
The bound is independent of $x_3$, proving \eqref{eq:trace}. The general case follows by density in the homogeneous Sobolev space, applied to $\nabla_h f$.
\end{proof}
\begin{remark}\label{rem:mixed}
The left-hand side of \eqref{eq:trace} is
\[
\operatorname*{ess\,sup}_{x_3\in\R}
\left(\int_{\R^2}|\nabla_hf(x_h,x_3)|^2\dd x_h\right)^{1/2}.
\]
It is not $\norm{\nabla_h f}{L^2(\R^2_{x_h};L^\infty(\R_{x_3}))}$. No endpoint embedding of $\dot H^{1/2}(\R)$ into $L^\infty(\R)$ is used. The horizontal derivative in \eqref{eq:trace} is what makes the Fourier integral finite.
\end{remark}

\begin{lemma}\label{lem:difficult}
For $i,j\in\{1,2\}$ and smooth $v^3,\omega$ with finite right-hand side,
\begin{equation}\label{eq:difficult}
\begin{split}
\left|\int_{\R^3}\partial_i v^3\,
 \partial_j\Delta_h^{-1}\partial_3\omega\,
 \omega|\omega|^{q-2}\dd x\right|
 \leq C_q\norm{v^3}{\Hs{3/2}}
          \norm{\partial_3\omega}{\Lq}\norm{\omega}{\Lq}^{q-1}.
\end{split}
\end{equation}
\end{lemma}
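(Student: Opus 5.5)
The estimate is a mixed-norm Hölder inequality in which the horizontal-derivative trace of Lemma~\ref{lem:trace} supplies the only factor needing $\dot H^{3/2}$ control. Write the integrand as a product of three factors: $\partial_iv^3$, the horizontal potential $P:=\partial_j\Delta_h^{-1}\partial_3\omega$, and $\omega|\omega|^{q-2}=\omega_{q-1}$. Fix $x_3$ and apply Hölder on each horizontal plane with exponents $2$, $\frac{2q}{2-q}$ and $\frac{q}{q-1}$, since $\frac12+\frac{2-q}{2q}+\frac{q-1}{q}=1$. This gives
\[
\left|\int_{\R^2}\partial_iv^3\,P\,\omega_{q-1}\dd x_h\right|
\leq\norm{\partial_iv^3(\cdot,x_3)}{L^2(\R^2)}
      \norm{P(\cdot,x_3)}{L^{2q/(2-q)}(\R^2)}
      \norm{\omega(\cdot,x_3)}{L^q(\R^2)}^{q-1}.
\]

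Next, control the potential $P$ plane by plane. On $\R^2$, $\partial_j\Delta_h^{-1}$ is a Riesz transform composed with $|\nabla_h|^{-1}$. The Hardy--Littlewood--Sobolev inequality in dimension two maps $L^q(\R^2)$ into $L^{q^*}(\R^2)$ with $\frac1{q^*}=\frac1q-\frac12$, that is, $q^*=\frac{2q}{2-q}$. Since $1<q<2$, this is valid and gives
\[
\norm{P(\cdot,x_3)}{L^{2q/(2-q)}(\R^2)}
\leq C_q\norm{\partial_3\omega(\cdot,x_3)}{L^q(\R^2)}.
\]
Because the operator is purely horizontal, no vertical integrability is lost. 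This exponent matching is the step where the choice of Hölder exponents must be checked; everything else is structural.

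Then integrate in $x_3$. Take the essential supremum in $x_3$ of the $\partial_iv^3$ factor, and apply Hölder in $x_3$ with exponents $q$ and $\frac{q}{q-1}$ to the remaining two factors. This yields
\[
\norm{\nabla_hv^3}{\mixed{\infty}{2}}
\norm{\partial_3\omega}{\Lq}\norm{\omega}{\Lq}^{q-1}.
\]
Lemma~\ref{lem:trace} bounds the first factor by $C\norm{v^3}{\Hs{3/2}}$, which proves \eqref{eq:difficult}.

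The main obstacle is conceptual rather than technical. Placing $\partial_iv^3$ in $L^\infty_{x_3}L^2_{x_h}$ is what avoids the logarithmically divergent embedding $\dot H^{1/2}(\R)\hookrightarrow L^\infty(\R)$, and Lemma~\ref{lem:trace} handles exactly this point. The remaining work is to justify the computation for smooth $\omega$ with $\partial_3\omega\in L^q$: the potential $P$ is defined plane-wise by HLS, and density arguments apply as in Lemma~\ref{lem:trace}.
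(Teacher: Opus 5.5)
Your proposal is correct and follows the paper's proof essentially step for step. Both arguments apply plane-wise H\"older with exponents $\bigl(2,\tfrac{2q}{2-q},\tfrac{q}{q-1}\bigr)$, then the two-dimensional Hardy--Littlewood--Sobolev bound for $\partial_j\Delta_h^{-1}$ from $L^q(\R^2)$ to $L^{2q/(2-q)}(\R^2)$, then H\"older in $x_3$ with exponents $(\infty,q,q')$, and finally Lemma~\ref{lem:trace} to control $\norm{\nabla_hv^3}{\mixed{\infty}{2}}$.
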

\begin{proof}
Set $a=2q/(2-q)$ and $q'=q/(q-1)$. Then
$1/2+1/a+1/q'=1$. On each horizontal plane, the two-dimensional Hardy--Littlewood--Sobolev inequality \cite{Stein1970} gives
\[
\norm{\partial_j\Delta_h^{-1}g}{L^a(\R^2)}
 \leq C_q\norm{g}{L^q(\R^2)}.
\]
Apply H\"older's inequality first on $\R^2_{x_h}$ with exponents $(2,a,q')$, and then on $\R_{x_3}$ with exponents $(\infty,q,q')$. The left-hand side of \eqref{eq:difficult} is bounded by
\[
C_q\norm{\partial_i v^3}{\mixed{\infty}{2}}
   \norm{\partial_3\omega}{\mixed{q}{q}}
   \norm{|\omega|^{q-1}}{\mixed{q'}{q'}}.
\]
The last two factors equal
$\norm{\partial_3\omega}{\Lq}$ and $\norm{\omega}{\Lq}^{q-1}$. Lemma~\ref{lem:trace} proves the claim.
\end{proof}

\paragraph{Vertical-vorticity energy.}
\begin{proposition}[Vertical-vorticity estimate]\label{prop:vorticity}
Let $v$ be a finite-energy solution of \eqref{eq:NS}, smooth in Sobolev spaces on a compact positive-time interval $J=[t_0,\tau]$. Assume that
\[
\omega\in C(J;W^{2,q}(\R^3))\cap C^1(J;L^q(\R^3)).
\]
Then, almost everywhere on $J$,
\begin{equation}\label{eq:X-energy}
\begin{aligned}
&\frac12\frac{\mathrm d}{\mathrm dt}\norm{\omega}{\Lq}^2
 +c_q\norm{\omega}{\Lq}^{2-q}\norm{\nabla\omega_{q/2}}{\Ltwo}^2\\
&\quad\leq C_q\norm{v^3}{\Hs{3/2}}\norm{\omega}{\Lq}
 \left(\norm{\omega}{\Lq}^{1-q/2}\norm{\nabla\omega_{q/2}}{\Ltwo}
 +\norm{\nabla^2v^3}{\Hh}\right).
\end{aligned}
\end{equation}
\end{proposition}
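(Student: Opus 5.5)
We test the vertical vorticity equation against $\omega_{q-1}=\omega|\omega|^{q-2}$ and then multiply by $\norm{\omega}{\Lq}^{2-q}$. For a solution of \eqref{eq:NS} the vertical vorticity satisfies
\[
\partial_t\omega+v\cdot\nabla\omega-\Delta\omega=\Omega\cdot\nabla v^3 ,
\]
where $\Omega=\curl v$. On $J$ the assumed regularity $\omega\in C(J;W^{2,q})\cap C^1(J;L^q)$ justifies the $L^q$ chain rule. It gives $\frac1q\frac{\mathrm d}{\mathrm dt}\norm\omega q^q$, and hence, after multiplying by $X^{2-q}$ (with $X=\norm{\omega}{\Lq}$), the term $\frac12\frac{\mathrm d}{\mathrm dt}X^2$. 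The justification uses the regularized identity \cite[Lemma 2.1]{HLLZ2019} near $\omega=0$, as in Lemma~\ref{lem:common-remainder}. The transport term vanishes because $\diver v=0$. The diffusion gives
\[
(q-1)\int|\nabla\omega|^2|\omega|^{q-2}=\frac{4(q-1)}{q^2}\norm{\nabla\omega_{q/2}}2^2,
\]
which after multiplication by $X^{2-q}$ produces the dissipation $c_qW^2$ on the left of \eqref{eq:X-energy}. It remains to bound $\mathcal N_\omega(v)=X^{2-q}\int(\Omega\cdot\nabla v^3)\omega_{q-1}\dd x$ by the right-hand side of \eqref{eq:X-energy}.

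We split the stretching term as $\Omega\cdot\nabla v^3=\Omega^h\cdot\nabla_hv^3+\omega\,\partial_3v^3$. The horizontal vorticity is $\Omega^h=(\partial_2v^3-\partial_3v^2,\;\partial_3v^1-\partial_1v^3)$. The parts of $\Omega^h$ containing $\nabla_hv^3$ give the quadratic term $\nabla_hv^3\otimes\nabla_hv^3$. The parts containing $\partial_3v^h$ are handled with the reconstruction \eqref{eq:reconstruction}:
\[
\partial_3v^h=\nabla_h^\perp\Delta_h^{-1}\partial_3\omega-\nabla_h\Delta_h^{-1}\partial_3^2v^3 .
\]
This leaves four families of terms.
\begin{enumerate}
\item The term $\partial_iv^3\,\partial_j\Delta_h^{-1}\partial_3\omega\,\omega_{q-1}$ is exactly the integrand of Lemma~\ref{lem:difficult}. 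We bound $\norm{\partial_3\omega}q\le(2/q)X^{1-q/2}\norm{\nabla\omega_{q/2}}2$ by \eqref{eq:G-W}. Multiplying by $X^{2-q}X^{q-1}$ gives $C\norm{v^3}{\Hs{3/2}}XW$, which is of the required form.
\item The term $\partial_iv^3\,\partial_j\Delta_h^{-1}\partial_3^2v^3\,\omega_{q-1}$ is treated with Hölder's inequality in the same mixed norms as Lemma~\ref{lem:difficult}. The factor $\partial_iv^3$ is taken in $L^\infty_{x_3}L^2_{x_h}$ via Lemma~\ref{lem:trace}, and $\omega_{q-1}$ in $L^{q'}$. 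The middle factor must be placed in $L^q_{x_3}L^a_{x_h}$ with $a=2q/(2-q)$, and horizontal Sobolev embedding gives
\[
\norm{\nabla_h\Delta_h^{-1}\partial_3^2v^3}{L^a_{x_h}}\le C\norm{|\nabla_h|^{-\delta}\partial_3^2v^3}{L^2_{x_h}} ,
\]
since $-1+2/a=-\delta+\ldots$ matches the exponent \eqref{eq:delta} after accounting for the $L^q$ versus $L^2$ mismatch in $x_3$. The product is then expected to be $\lesssim\norm{v^3}{\Hs{3/2}}XZ$.
\item The term $\omega\,\partial_3v^3\,\omega_{q-1}=|\omega|^q\partial_3v^3$ and the quadratic term $(\nabla_hv^3)^2\omega_{q-1}$ are lower order. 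For the first, Hölder gives $\norm{\partial_3v^3}{L^\infty_{x_3}L^2_{x_h}}\norm{|\omega|^q}{L^1_{x_3}L^2_{x_h}}$, controlled through $\norm{\omega_{q/2}}{L^4_{x_h}}^2\lesssim\norm{\omega_{q/2}}2\norm{\nabla_h\omega_{q/2}}2$ in the plane. This should yield $\norm{v^3}{\Hs{3/2}}XW$. Note that $\partial_3v^3=-\diver_hv^h$, so a horizontal derivative is available and a trace bound analogous to Lemma~\ref{lem:trace} applies to it.
\end{enumerate}

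The main obstacle is step 2: the term carrying $\partial_3^2v^3$ through the horizontal inverse Laplacian. The trace bound uses $L^\infty$ in $x_3$, so the remaining factors must match $L^q$ in $x_3$, whereas $Z$ is an $L^2_{x_3}$ quantity. The first thing to try is moving one vertical derivative onto $\omega_{q-1}$ by integrating by parts in $x_3$. This converts the term into pieces like $\partial_i\partial_3v^3\,\partial_j\Delta_h^{-1}\partial_3v^3\,\omega_{q-1}$ and $\partial_iv^3\,\partial_j\Delta_h^{-1}\partial_3v^3\,\partial_3\omega_{q-1}$. Both can then be handled by the anisotropic Hölder–Sobolev estimates of \cite{HLLZ2019}, with $q$ close to $2$ (as fixed at the start of Section~\ref{sec:endpoint}) so that $\delta$ is small and $\norm{\nabla^2v^3}{\Hh}$ absorbs the needed regularity.
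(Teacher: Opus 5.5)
Your framework is the paper's: testing by $\omega_{q-1}$, multiplying by $X^{2-q}$, rewriting $\partial_3v^h$ through \eqref{eq:reconstruction}, and treating the $\partial_3\omega$ terms by Lemma~\ref{lem:difficult} and \eqref{eq:G-W}. The proof is not closed, however, at your step~2, the terms $\partial_iv^3\,\partial_j\Delta_h^{-1}\partial_3^2v^3\,\omega_{q-1}$. Once $\partial_iv^3$ is placed in $L^\infty_{x_3}L^2_{x_h}$ and $\omega_{q-1}$ in $L^{q'}$, the middle factor must lie in $L^q_{x_3}L^{3/\delta}_{x_h}$ (note $2q/(2-q)=3/\delta$). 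But $Z=\norm{\nabla^2v^3}{\Hh}$, the horizontal order $-1$ multiplier $\partial_j\Delta_h^{-1}$, and $\dot H^{1-\delta}(\R^2)\hookrightarrow L^{2/\delta}(\R^2)$ only give $L^2_{x_3}L^{2/\delta}_{x_h}$. Both exponents are off, and on $\R$ an $L^2$ bound yields no $L^q$ bound for $q<2$. Your proposed remedy, integrating by parts in $x_3$, does not help. It produces $\partial_3\omega_{q-1}=(q-1)|\omega|^{q-2}\partial_3\omega=\tfrac{2(q-1)}{q}|\omega|^{q/2-1}\partial_3\omega_{q/2}$. For $q<2$ the factor $|\omega|^{q/2-1}$ is unbounded wherever $\omega$ is small, in particular at spatial infinity. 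Cauchy--Schwarz against the dissipation then leaves $\int|F|^2|\omega|^{q-2}\dd x$ with $F=\partial_iv^3\,\partial_j\Delta_h^{-1}\partial_3v^3$. That is a negative power of $|\omega|$, which none of the quantities in \eqref{eq:X-energy} controls.

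The missing idea is to abandon the $L^\infty_{x_3}$ trace for this term and redistribute integrability so that the $Z$-factor sits in $L^2_{x_3}$. Take $1/a_1=1/2-\delta/6$ and $1/b=\delta/3$. Then $1/a_1+\delta/2+1/q'=1$ and $1/b+1/2+1/q'=1$, precisely because $\delta=3/q-3/2$. Horizontal Sobolev and Riesz bounds give $\norm{\partial_j\Delta_h^{-1}\partial_3^2v^3}{\mixed{2}{2/\delta}}\leq C\norm{\nabla^2v^3}{\Hh}$. Horizontal and vertical Sobolev embeddings and a Fourier-weight comparison give $\norm{\partial_iv^3}{\mixed{b}{a_1}}\leq C\norm{|\nabla_h|^{\delta/3}|\partial_3|^{1/2-\delta/3}\partial_iv^3}{\Ltwo}\leq C\norm{v^3}{\Hs{3/2}}$. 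After multiplication by $X^{2-q}$ this gives $C\norm{v^3}{\Hs{3/2}}XZ$, as required.

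Your step~3 also needs repair. $\norm{\partial_3v^3}{L^\infty_{x_3}L^2_{x_h}}$ is not bounded by $\norm{v^3}{\Hs{3/2}}$: the weight $\xi_3^2(|\xi_h|^2+\xi_3^2)^{-3/2}$ decays only like $|\xi_3|^{-1}$, which is the point of Remark~\ref{rem:mixed}. Writing $\partial_3v^3=-\diver_hv^h$ gives only a bound by $\norm{v^h}{\Hs{3/2}}$, which is not on the right of \eqref{eq:X-energy}. Use instead the isotropic estimate $\norm{\partial_3v^3}{\Lthree}\norm{\omega_{q/2}}{\Lthree}^2\leq C\norm{v^3}{\Hs{3/2}}\norm{\omega}{\Lq}^{q/2}\norm{\nabla\omega_{q/2}}{\Ltwo}$. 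Finally, the quadratic $\nabla_hv^3\otimes\nabla_hv^3$ terms you list cancel identically, since $\Omega^h\cdot\nabla_hv^3=\partial_2v^3\,\partial_3v^1-\partial_1v^3\,\partial_3v^2$.
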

\begin{proof}
Taking the third component of the curl gives
\begin{equation}\label{eq:vorticity}
\partial_t\omega+v\cdot\nabla\omega-\Delta\omega
 =\partial_3v^3\,\omega+\partial_2v^3\,\partial_3v^1
                         -\partial_1v^3\,\partial_3v^2.
\end{equation}
Test by $\omega|\omega|^{q-2}$ and integrate over $\R^3$. The transport term vanishes by incompressibility. Integration by parts gives
\begin{equation}\label{eq:vorticity-identity}
\begin{split}
\frac1q\frac{\mathrm d}{\mathrm dt}\norm{\omega}{\Lq}^q
 +\frac{4(q-1)}{q^2}\norm{\nabla\omega_{q/2}}{\Ltwo}^2
 ={}&\int_{\R^3}\partial_3v^3\,|\omega|^q\dd x\\
 &+\int_{\R^3}
 (\partial_2v^3\,\partial_3v^1-\partial_1v^3\,\partial_3v^2)
 \omega|\omega|^{q-2}\dd x.
\end{split}
\end{equation}
Here and below the test is understood through a space--time regularization. More precisely, set
\[
\Phi_\eps(s)=\frac{(s^2+\eps^2)^{q/2}-\eps^q}{q},
\qquad
\Phi_\eps'(s)=s(s^2+\eps^2)^{q/2-1}.
\]
Use $\Phi_\eps'(\omega)$ multiplied by a smooth cutoff supported in a ball of radius $2R$, and integrate first over space and time. For $1<q<2$,
\[
0\leq\Phi_\eps(s)\leq\frac{|s|^q}{q},
\qquad |\Phi_\eps'(s)|\leq |s|^{q-1}.
\]
These bounds allow $R\to\infty$; in particular, the transport boundary term vanishes because the transport field is bounded and divergence free. Subtracting $\eps^q$ in $\Phi_\eps$ makes the regularized energy integrable on $\R^3$. For the diffusion term, \cite[Lemma 2.1]{HLLZ2019} gives
\[
\frac{4(q-1)}{q^2}\norm{\nabla\omega_{q/2}}{\Ltwo}^2
 =-\int_{\R^3}\Delta\omega\,\omega|\omega|^{q-2}\dd x
 \leq\norm{\Delta\omega}{\Lq}\norm{\omega}{\Lq}^{q-1}.
\]
Its hypotheses hold under the stated local regularity. Moreover,
$0\leq\Phi_\eps''(s)\leq C_q|s|^{q-2}$ for $s\neq0$, and $\nabla\omega=0$ almost everywhere on $\{\omega=0\}$. The displayed diffusion identity therefore provides an integrable majorant on $J\times\R^3$. Letting $\eps\downarrow0$ yields \eqref{eq:vorticity-identity} in integrated form and hence almost everywhere in time.

The stretching term satisfies
\begin{align}\label{eq:stretching}
\left|\int_{\R^3}\partial_3v^3\,|\omega|^q\dd x\right|
&\leq\norm{\partial_3v^3}{\Lthree}
       \norm{\omega_{q/2}}{\Lthree}^2\notag\\
&\leq C\norm{v^3}{\Hs{3/2}}\norm{\omega}{\Lq}^{q/2}
                          \norm{\nabla\omega_{q/2}}{\Ltwo},
\end{align}
by the Sobolev and Gagliardo--Nirenberg inequalities. To treat the remaining stretching terms without hiding any component, write the reconstruction formula componentwise:
\[
 v^1=-\partial_2\Delta_h^{-1}\omega
      -\partial_1\Delta_h^{-1}\partial_3v^3,
 \qquad
 v^2= \partial_1\Delta_h^{-1}\omega
      -\partial_2\Delta_h^{-1}\partial_3v^3.
\]
Differentiation in $x_3$ gives the exact decomposition
\begin{align}\label{eq:stretching-decomposition}
&\partial_2v^3\,\partial_3v^1-\partial_1v^3\,\partial_3v^2\notag\\
&\quad=-\partial_2v^3\,\partial_2\Delta_h^{-1}\partial_3\omega
       -\partial_1v^3\,\partial_1\Delta_h^{-1}\partial_3\omega\notag\\
&\qquad\phantom{=}
       -\partial_2v^3\,\partial_1\Delta_h^{-1}\partial_3^2v^3
       +\partial_1v^3\,\partial_2\Delta_h^{-1}\partial_3^2v^3.
\end{align}
The first two terms on the right of \eqref{eq:stretching-decomposition}, after multiplication by $\omega|\omega|^{q-2}$ and integration, are both bounded by Lemma~\ref{lem:difficult}. It remains to estimate the two terms containing $\partial_3^2v^3$. For arbitrary $i,j\in\{1,2\}$ set
\[
I_{\mathrm{div}}^{ij}
 =\int_{\R^3}\partial_i v^3\,
       \partial_j\Delta_h^{-1}\partial_3^2v^3\,
       \omega|\omega|^{q-2}\dd x.
\]
Choose $a_1,b$ by
\begin{equation}\label{eq:ab}
\frac1{a_1}=\frac1q-\frac\delta2=\frac12-\frac\delta6,
\qquad
\frac1b=\frac1q-\frac12=\frac\delta3.
\end{equation}
The horizontal and vertical H\"older relations are
\[
\frac1{a_1}+\frac\delta2+\frac1{q'}=1,
\qquad
\frac1b+\frac12+\frac1{q'}=1.
\]
Hence
\begin{equation}\label{eq:Idiv-holder}
|I_{\mathrm{div}}^{ij}|
 \leq\norm{\partial_i v^3}{\mixed{b}{a_1}}
 \norm{\partial_j\Delta_h^{-1}\partial_3^2v^3}{\mixed{2}{2/\delta}}
 \norm{\omega}{\Lq}^{q-1}.
\end{equation}
The two-dimensional Sobolev inequality and horizontal Riesz-transform boundedness give
\begin{equation}\label{eq:Idiv-second}
\norm{\partial_j\Delta_h^{-1}\partial_3^2v^3}{\mixed{2}{2/\delta}}
 \leq C_\delta\norm{\partial_3^2v^3}{\Hh}
 \leq C_\delta\norm{\nabla^2v^3}{\Hh}.
\end{equation}
For the first factor, horizontal and vertical Sobolev inequalities yield
\begin{align}\label{eq:Idiv-first}
\norm{\partial_i v^3}{\mixed{b}{a_1}}
&\leq C_\delta
  \norm{|\nabla_h|^{\delta/3}|\partial_3|^{1/2-\delta/3}
                \partial_i v^3}{\Ltwo}\notag\\
&\leq C_\delta\norm{v^3}{\Hs{3/2}}.
\end{align}
The first inequality is obtained by the horizontal Sobolev embedding on each plane and then the one-dimensional Sobolev embedding in $x_3$. The last step is a Fourier-weight comparison, since the total derivative order is $3/2$. Consequently, for every $i,j\in\{1,2\}$,
\begin{equation}\label{eq:Idiv}
|I_{\mathrm{div}}^{ij}|\leq C_q\norm{v^3}{\Hs{3/2}}
 \norm{\nabla^2v^3}{\Hh}\norm{\omega}{\Lq}^{q-1}.
\end{equation}
Combining the two curl terms and the two divergence terms in \eqref{eq:stretching-decomposition}, we obtain the complete bound
\begin{equation}\label{eq:all-cross-terms}
\begin{split}
&\left|\int_{\R^3}
 (\partial_2v^3\,\partial_3v^1-\partial_1v^3\,\partial_3v^2)
 \omega|\omega|^{q-2}\dd x\right|\\
&\quad\leq C_q\norm{v^3}{\Hs{3/2}}
 \left(\norm{\partial_3\omega}{\Lq}
       +\norm{\nabla^2v^3}{\Hh}\right)
 \norm{\omega}{\Lq}^{q-1}.
\end{split}
\end{equation}

Multiply \eqref{eq:vorticity-identity} by $\norm{\omega}{\Lq}^{2-q}$ and use \eqref{eq:G-W}, \eqref{eq:stretching}, and \eqref{eq:all-cross-terms}. Since
\[
\frac1q\norm{\omega}{\Lq}^{2-q}
       \frac{\mathrm d}{\mathrm dt}\norm{\omega}{\Lq}^{q}
 =\frac12\frac{\mathrm d}{\mathrm dt}\norm{\omega}{\Lq}^{2},
\]
we obtain
\eqref{eq:X-energy}.
The map $a\mapsto\norm{a}{\Lq}^2$ is continuously differentiable on $L^q(\R^3)$ for $q>1$, with derivative zero at $a=0$. Its Banach-space chain rule gives the norm identity also at times when $\omega=0$, with the dissipation convention in \eqref{eq:ED}. The resulting inequality holds almost everywhere and in its time-integrated form.
\end{proof}

\paragraph{Third-component estimate and coupling.}

For a divergence-free field $v\in H^m$, $m\geq3$, with $\omega\in W^{1,q}$, use its Riesz pressure $\pi_v$ from \eqref{eq:Pi}. 
When $v$ solves \eqref{eq:NS}, its third component satisfies
\[
\partial_t v^3+v\cdot\nabla v^3-\Delta v^3+\partial_3\pi_v=0.
\]
For each $k\in\{1,2,3\}$, apply $|\nabla_h|^{-\delta}\partial_k$ and test by $|\nabla_h|^{-\delta}\partial_kv^3$ in $L^2(\R^3)$. Summing in $k$ and integrating by parts gives
\begin{equation}\label{eq:Y-identity}
\frac12\frac{\mathrm d}{\mathrm dt}
 \norm{\nabla v^3}{\Hh}^2
 +\norm{\nabla^2v^3}{\Hh}^2
 =I_1(v)+I_2(v)+I_3(v),
\end{equation}
where the three spatial functionals are
\begin{equation}\label{eq:N}
\begin{aligned}
I_1(v)&=-\sum_{k=1}^3
 \pairh{\partial_kv\cdot\nabla v^3}{\partial_kv^3},\\
I_2(v)&=-\sum_{k=1}^3
 \pairh{v\cdot\nabla\partial_kv^3}{\partial_kv^3},\\
I_3(v)&=-\sum_{k=1}^3
 \pairh{\partial_k\partial_3\pi_v}{\partial_kv^3}.
\end{aligned}
\end{equation}
Thus $I_1$, $I_2$, and $I_3$ come from differentiated convection, transport, and pressure, respectively. The time-integrated identity is
\begin{equation}\label{eq:Y-integrated}
\begin{split}
\frac12\left(\norm{\nabla v^3(t)}{\Hh}^2
             -\norm{\nabla v^3(t_0)}{\Hh}^2\right)
 +\int_{t_0}^t\norm{\nabla^2v^3(s)}{\Hh}^2\dd s
 =\int_{t_0}^t\sum_{j=1}^3 I_j(v(s))\dd s.
\end{split}
\end{equation}

\begin{lemma}[Han--Lei--Li--Zhao]\label{lem:HLLZ}
Fix $q<2$ sufficiently close to $2$, and let $\delta$ be as in \eqref{eq:delta}. Suppose that $v\in H^m(\R^3)$ for an integer $m\geq3$, $\diver v=0$, and $\omega=\partial_1v^2-\partial_2v^1\in W^{1,q}(\R^3)$. With the Riesz pressure \eqref{eq:Pi}, the spatial functionals in \eqref{eq:N} satisfy
\begin{equation}\label{eq:HLLZ}
\begin{split}
|I_1(v)+I_2(v)+I_3(v)|
\leq{}& C_q\norm{v^3}{\Hs{3/2}}
 \left(\norm{\nabla v^3}{\Hh}+\norm{\omega}{\Lq}\right)\\
&\quad\times
 \left(\norm{\nabla^2v^3}{\Hh}+\norm{\nabla\omega}{\Lq}\right).
\end{split}
\end{equation}
\end{lemma}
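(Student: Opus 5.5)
This estimate is the fixed-time third-component bound of Han, Lei, Li, and Zhao \cite{HLLZ2019}. The plan is to reproduce their decomposition term by term rather than rederive it from scratch. Each functional is a trilinear form in $v$. In it, we replace $v^h$ by the reconstruction \eqref{eq:reconstruction}, $v^h=\nabla_h^\perp\Delta_h^{-1}\omega-\nabla_h\Delta_h^{-1}\partial_3v^3$. Every resulting piece then contains one factor of $v^3$, measured in $\dot H^{3/2}$. Of the two remaining factors, one is measured at energy level and one at dissipation level. The energy level is $\norm{\nabla v^3}{\Hh}$ or $\norm{\omega}{\Lq}$; the dissipation level is $\norm{\nabla^2v^3}{\Hh}$ or $\norm{\nabla\omega}{\Lq}$. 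Scaling fixes this structure, because $\norm{\omega}{\Lq}$ and $\norm{\nabla v^3}{\Hh}$ share the scaling of $\dot H^{1/2}$ exactly when $\delta=3/q-3/2$. The condition $q$ close to $2$ (so $\delta$ small) is needed only to keep all Sobolev and Hardy--Littlewood--Sobolev exponents admissible.

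\textbf{Step 1: transport and vertical terms.} For $I_2$, the commutator structure is used. Since $\diver v=0$, we have $\pairh{v\cdot\nabla F}{F}=\pair{[|\nabla_h|^{-\delta},v\cdot\nabla]F}{|\nabla_h|^{-\delta}F}$ with $F=\partial_kv^3$. Only horizontal derivatives of $v$ enter the commutator, since $|\nabla_h|^{-\delta}$ commutes with $\partial_3$. So $I_2$ reduces to terms of the type of $I_1$ with $\nabla_hv$ in place of $\partial_kv$. In $I_1$ and $I_3$, pieces whose differentiated velocity factor is $v^3$ are cubic in $v^3$. These are bounded by \eqref{eq:product1} together with anisotropic Sobolev embeddings of mixed type, as in \eqref{eq:Idiv-first}. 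The target product is $\norm{v^3}{\Hs{3/2}}\norm{\nabla v^3}{\Hh}\norm{\nabla^2 v^3}{\Hh}$.

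\textbf{Step 2: horizontal velocity through the reconstruction.} This is where $\omega$ enters. In the curl part $\nabla_h^\perp\Delta_h^{-1}\omega$, one horizontal derivative gives an order-zero horizontal Riesz transform of $\omega$, bounded in $L^q$ plane by plane. The negative horizontal regularity is then transferred by the dual form of \eqref{eq:product1}, with $L^q\hookrightarrow\dot H^{-\delta}$ horizontally and $\frac1q=\frac12+\frac\delta2$. A pairing that places $\nabla\omega$ against the $v^3$ factors then yields $\norm{v^3}{\Hs{3/2}}\norm{\omega}{\Lq}\norm{\nabla^2v^3}{\Hh}$ or $\norm{v^3}{\Hs{3/2}}\norm{\nabla v^3}{\Hh}\norm{\nabla\omega}{\Lq}$. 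For the pressure, $\partial_3\pi_v$ is expanded as $\partial_3(-\Delta)^{-1}\sum\partial_i\partial_j(v^iv^j)$. The terms with $i,j$ both horizontal are $v^h\otimes v^h$, which after substitution is quadratic in $(\omega,\partial_3v^3)$. The remaining terms contain $v^3$ explicitly and are handled by Step 1.

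\textbf{Main obstacle.} The delicate case is the part of $I_1$ and $I_3$ that contains no explicit $v^3$ factor other than the test function $\partial_kv^3$. An example is $\partial_3\partial_k(-\Delta)^{-1}\partial_i\partial_j(v^iv^j)$ with $i,j\le2$, paired against $\partial_kv^3$. This is the contribution $\omega\omega v^3$. The $\dot H^{3/2}$ norm must fall on the test function $v^3$, and one $\omega$ factor must absorb a full derivative in $L^q$. I would first move all derivatives off $v^3$ and estimate $\omega\otimes\nabla\omega$ (in horizontal-Riesz form) in a mixed Lebesgue space. The target is its $L^2_{x_3}\dot H^{-\delta}_{x_h}$ norm after one $|\nabla|^{-1/2}$ is traded, using HLS and H\"older with $\norm{\omega}{\Lq}\norm{\nabla\omega}{\Lq}$. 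The low-frequency singularity of $\Delta_h^{-1}$ is the reason for the weight $|\nabla_h|^{-\delta}$, and the bookkeeping of exponents there is the step I expect to require the care of \cite{HLLZ2019}. Should this fail, I would invoke the published fixed-time estimates of \cite{HLLZ2019} directly, as the paper itself does. Their Propositions provide exactly \eqref{eq:HLLZ} under the stated regularity, which justifies all integrations by parts since $v\in H^m$ with $m\ge3$.
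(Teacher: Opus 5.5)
Your fallback is the paper's proof. The paper does not derive \eqref{eq:HLLZ}; it imports it from \cite{HLLZ2019}. It identifies $I_1,I_2,I_3$ with (5.3)--(5.5) of \arxiv{1708.04119v1} summed over $k$, and reads the four products $\norm{v^3}{\Hs{3/2}}(YZ+YG+XZ+XG)$ off (7.2) there, where their vorticity exponent $r$ is $q$ here. It then adds that spatial smoothing extends the bound to $v\in H^m$, $\omega\in W^{1,q}$. Your claim that the cited work gives \eqref{eq:HLLZ} ``exactly, under the stated regularity'' therefore slightly overstates things, but as a citation your proposal and the paper agree.

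The self-contained reconstruction you outline, however, does not work as written.

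In Step~2, $\delta=3/q-3/2$ gives $\frac1q=\frac12+\frac\delta3$, not $\frac12+\frac\delta2$. The plane-wise embedding into $\dot H^{-\delta}(\R^2)$ therefore needs the exponent $2/(1+\delta)\neq q$. Even then, an $L^q(\R^3)$ factor has only $L^q$ integrability in $x_3$, while $\Hh=L^2(\R_{x_3};\dot H^{-\delta}(\R^2_{x_h}))$, so it cannot be placed in $\Hh$ directly.

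The decisive failure is in your main-obstacle step. After you move all derivatives off $v^3$, H\"older with $\norm{\omega}{\Lq}\norm{\nabla\omega}{\Lq}$ puts $\omega\otimes\nabla\omega$ in $L^{q/2}$ with $q/2<1$. No Hardy--Littlewood--Sobolev or dual Sobolev inequality starts from $L^{q/2}$. This is not an accident of your exponents. Put all derivatives and the full weight $|\nabla_h|^{-2\delta}$ on the vorticity side, as you must, since $\norm{v^3}{\Hs{3/2}}$ carries no negative horizontal weight. Then scaling forces any bound $\norm{|\nabla|^{-3/2}|\nabla_h|^{-2\delta}F}{\Ltwo}\lesssim\norm{F}{L^s}$ to have $\frac1s=1+\frac{2\delta}3=\frac2q$, i.e.\ exactly $s=q/2$. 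Your stated target, with $|\nabla|^{-1/2}$ and a single $|\nabla_h|^{-\delta}$, is not even scale-consistent with $\norm{\omega}{\Lq}\norm{\nabla\omega}{\Lq}$.

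A correct argument has to avoid the $L^{q/2}$ product. One way is to keep derivatives on $v^3$ and measure the dissipation-level vorticity factor through $\dot W^{1,q}\hookrightarrow L^{3q/(3-q)}$. One must also distribute $|\nabla_h|^{-2\delta}$ and the horizontal potentials $\nabla_h\Delta_h^{-1}$ over mixed anisotropic norms. That bookkeeping is what \cite{HLLZ2019} carries out, and what the paper imports rather than reproves.

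A minor slip: $\norm{\omega}{\Lq}$ and $\norm{\nabla v^3}{\Hh}$ share the scaling of $\dot H^{1-\delta}$, not of $\dot H^{1/2}$.
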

We use the spatial estimate of \cite{HLLZ2019}. In Section 5 of the authors' preprint \arxiv{1708.04119v1}, $I_1$, $I_2$, and $I_3$ correspond, respectively, to (5.3), (5.4), and (5.5), summed over $k$; the four terms obtained by expanding \eqref{eq:HLLZ} are collected in (7.2) on page 25. The vorticity exponent called $r$ there is $q$ here. Spatial smoothing and the product estimates extend the bound to the stated finite-energy class. The estimate applies to the principal pairing of the remainder as well; Lemma~\ref{lem:common-perturbation} controls its background terms.

For clarity, put $V=\norm{v^3}{\Hs{3/2}}$, $X=\norm{\omega}{q}$,
$G=\norm{\nabla\omega}{q}$, $Y=\norm{\nabla v^3}{\Hh}$, and
$Z=\norm{\nabla^2v^3}{\Hh}$. The four terms of the cited estimate are
\[
 C_qV(YZ+YG+XZ+XG).
\]
They contain no logarithmic factor. The logarithmic vorticity estimate in the same reference is not used: that part is replaced by the vertical-vorticity estimate above. By \eqref{eq:G-W}, $G\leq C_qD^{1/2}$, whereas $X,Y\leq E^{1/2}$ and $Z\leq D^{1/2}$. It follows from \eqref{eq:Y-identity} that
\begin{equation}\label{eq:Y-energy}
\frac12\frac{\mathrm d}{\mathrm dt}
 \norm{\nabla v^3}{\Hh}^2
 +\norm{\nabla^2v^3}{\Hh}^2
 \leq C_q\norm{v^3}{\Hs{3/2}}E^{1/2}D^{1/2}.
\end{equation}
Proposition~\ref{prop:vorticity} gives the same right-hand side bound for the vorticity energy. Adding the two estimates yields
\begin{equation}\label{eq:coupled}
E'+cD\leq C\norm{v^3}{\Hs{3/2}}E^{1/2}D^{1/2}.
\end{equation}
The product with $D^{1/2}$ is the form used in the frequency splitting below.

The spatial bounds proved here also show, with the notation \eqref{eq:common-pairings}, that
\begin{equation}\label{eq:endpoint-principal}
 |\mathcal N_\omega(v)|+|\mathcal N_3(v)|
 \leq C_q\norm{v^3}{\Hs{3/2}}E^{1/2}D^{1/2}.
\end{equation}
Indeed, the vorticity proof bounds the absolute value of each principal term, and $\mathcal N_3(v)=-\sum_{j=1}^3 I_j(v)$. This is the form used for the heat-flow remainder.

\subsection{Frequency decomposition and Osgood control}\label{sec:frequency}
The high-frequency part is absorbed into the same coupled dissipation that occurs in \eqref{eq:endpoint-principal}. Only the finite middle range incurs a logarithmic summation loss.

\paragraph{Frequency decomposition.}
\begin{lemma}\label{lem:splitting}
Let $2<r\leq\infty$. For every integer $N\geq1$ and every scalar function $f$ with finite right-hand side,
\begin{equation}\label{eq:splitting}
\begin{split}
\norm{f}{\Hs{3/2}}
 \leq{}& C2^{-N(1/2+\delta)}\norm{\nabla f}{\Hh}
       +C(2N+1)^{1/2-1/r}\norm{f}{\Bs}\\
 &+C2^{-N(1/2-\delta)}\norm{\nabla^2f}{\Hh},
\end{split}
\end{equation}
where $1/\infty=0$.
\end{lemma}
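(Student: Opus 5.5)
Split $f$ in frequency with the homogeneous Littlewood--Paley blocks into three ranges: low frequencies $j<-N$, middle frequencies $-N\le j\le N$, and high frequencies $j>N$. Almost orthogonality gives
\[
\norm{f}{\Hs{3/2}}^2\simeq\sum_{j\in\Z}2^{3j}\norm{\dot\Delta_jf}{\Ltwo}^2 .
\]
By the triangle inequality it then suffices to bound the $\dot H^{3/2}$ norm of each of the three pieces by the corresponding term on the right of \eqref{eq:splitting}.

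\emph{Middle range.} There are $2N+1$ indices in this range. Hölder's inequality on the finite index set, with exponents $r/2$ and $(1-2/r)^{-1}$ applied to the sequence $(2^{3j}\norm{\dot\Delta_jf}{2}^2)$, gives
\[
\Bigl(\sum_{|j|\le N}2^{3j}\norm{\dot\Delta_jf}2^2\Bigr)^{1/2}
\le (2N+1)^{1/2-1/r}\norm{f}{\Bs}.
\]
When $r=\infty$ this simply reads $\ell^\infty\subset\ell^2$ on $2N+1$ points, with factor $(2N+1)^{1/2}$.

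\emph{Low and high ranges.} Here I compare Fourier weights pointwise rather than block by block. On the support of a low block, $|\xi|\lesssim 2^{-N}$. Using $|\xi_h|\le|\xi|$ and $\delta>0$, I would write
\[
|\xi|^3 = |\xi|^{1+2\delta}\,|\xi|^{2-2\delta}\le |\xi|^{1+2\delta}\,|\xi|^2|\xi_h|^{-2\delta},
\]
so that $|\xi|^3\le C2^{-N(1+2\delta)}|\xi_h|^{-2\delta}|\xi|^2$. Integrating against $|\widehat f|^2$ gives the first term, $C2^{-N(1/2+\delta)}\norm{\nabla f}{\Hh}$. On the high range, $|\xi|\gtrsim2^N$. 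Since $1-2\delta>0$,
\[
|\xi|^3 = |\xi|^4|\xi|^{-1} = |\xi|^{-(1-2\delta)}\,|\xi|^{4}|\xi|^{-2\delta}\le |\xi|^{-(1-2\delta)}\,|\xi_h|^{-2\delta}|\xi|^4,
\]
and hence $|\xi|^3\le C2^{-N(1-2\delta)}|\xi_h|^{-2\delta}|\xi|^4$. This gives the last term, $C2^{-N(1/2-\delta)}\norm{\nabla^2 f}{\Hh}$, using $\norm{\nabla^2f}{\Hh}^2=\int|\xi_h|^{-2\delta}|\xi|^4|\widehat f|^2\dd\xi$ because the tensor norm includes all ordered derivative pairs.

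To make the pointwise comparisons rigorous, I would replace the sharp dyadic ranges by the smooth multipliers $\chi(2^{N}\xi)$, $1-\chi(2^{-N}\xi)$ and their complement. These overlap only in boundedly many blocks, so the constants are unchanged.

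The step that needs the most care is the use of the range $0<\delta<1/2$ from \eqref{eq:delta}. The inequality $|\xi_h|^{-2\delta}\ge|\xi|^{-2\delta}$ has the correct direction precisely because $\delta>0$. The exponents of $2^{\mp N}$ are positive only because $\delta<1/2$, and they do not depend on $r$. Everything else is routine.
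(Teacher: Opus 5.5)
Your proposal is correct and follows essentially the same route as the paper. Both use the three-range dyadic split, the finite H\"older inequality on the $2N+1$ middle indices, and the Fourier-weight comparison $|\xi_h|^{-2\delta}\geq|\xi|^{-2\delta}$ for the low and high tails. The only difference is cosmetic: you compare weights pointwise on the supports of the low/high cutoffs, whereas the paper first records $\norm{f}{\Hs{1-\delta}}\leq\norm{\nabla f}{\Hh}$ and $\norm{f}{\Hs{2-\delta}}\leq\norm{\nabla^2f}{\Hh}$ and then sums the dyadic blocks.
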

\begin{proof}
Comparison of the Fourier weights gives
\begin{equation}\label{eq:layers}
\norm{f}{\Hs{1-\delta}}\leq\norm{\nabla f}{\Hh},
\qquad
\norm{f}{\Hs{2-\delta}}\leq\norm{\nabla^2f}{\Hh}.
\end{equation}
Indeed, $|\xi|^{2-2\delta}\leq|\xi_h|^{-2\delta}|\xi|^2$, and multiplication by $|\xi|^2$ gives the second comparison. Littlewood--Paley equivalence therefore implies
\begin{align}
\sum_{j<-N}2^{3j}\norm{\dot\Delta_jf}{\Ltwo}^2
 &\leq C2^{-2N(1/2+\delta)}\norm{\nabla f}{\Hh}^2,\label{eq:low}\\
\sum_{j>N}2^{3j}\norm{\dot\Delta_jf}{\Ltwo}^2
 &\leq C2^{-2N(1/2-\delta)}\norm{\nabla^2f}{\Hh}^2.\label{eq:high}
\end{align}
For the middle range, the finite-sequence H\"older inequality gives
\begin{equation}\label{eq:middle}
\begin{split}
\left(\sum_{|j|\leq N}2^{3j}\norm{\dot\Delta_jf}{\Ltwo}^2\right)^{1/2}
 &\leq(2N+1)^{1/2-1/r}
 \left(\sum_{|j|\leq N}
   \bigl(2^{3j/2}\norm{\dot\Delta_jf}{\Ltwo}\bigr)^r\right)^{1/r}\\
 &\leq(2N+1)^{1/2-1/r}\norm{f}{\Bs}.
\end{split}
\end{equation}
For $r=\infty$, the first right-hand side is interpreted using a supremum instead of the $\ell^r$ norm. Combining the low, middle, and high ranges proves \eqref{eq:splitting}.
\end{proof}

\paragraph{Osgood estimate.}
The following estimate includes the integrable errors from Lemma~\ref{lem:common-perturbation}. The logarithmic interpolation mechanism is related to the limiting inequalities in \cite{BG1980,BW1980,KT2000,KOT2002}; the frequency bound used here was proved above.
\begin{proposition}\label{prop:Osgood}
Let $2<r\leq\infty$ and $t_0<T<\infty$. Suppose that the measurable energy and dissipation defined in \eqref{eq:ED} satisfy
\[
E\in AC_{\mathrm{loc}}([t_0,T)),\qquad E\geq\mathrm e,
\qquad D\in L^1_{\mathrm{loc}}([t_0,T)),\qquad D\geq0,
\]
where $AC_{\mathrm{loc}}$ means absolute continuity on every compact interval $[t_0,\tau]\subset[t_0,T)$. Assume that, almost everywhere on $[t_0,T)$,
\begin{equation}\label{eq:general-energy}
E'+cD\leq C\norm{v^3}{\Hs{3/2}}E^{1/2}D^{1/2}+g(t)E,
\end{equation}
where $g\geq0$, $g\in L^1(t_0,T)$, and $E(t_0)<\infty$. If
$v^3\in L^2(t_0,T;\dot B^{3/2}_{2,r}(\R^3))$, then
\begin{equation}\label{eq:ED-bound}
\sup_{t_0\leq t<T}E(t)+\int_{t_0}^TD(t)\dd t<\infty.
\end{equation}
\end{proposition}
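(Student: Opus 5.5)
The plan is to apply Lemma~\ref{lem:splitting} pointwise in time with $f=v^3$ and a time-dependent cutoff level $N=N(t)$ chosen in terms of $E(t)$. By definition \eqref{eq:ED}, $\norm{\nabla v^3}{\Hh}\leq E^{1/2}$ and $\norm{\nabla^2v^3}{\Hh}\leq D^{1/2}$. Inserting \eqref{eq:splitting} into the principal term of \eqref{eq:general-energy} gives three contributions:
\[
C2^{-N(1/2+\delta)}E\,D^{1/2}
+C(2N+1)^{1/2-1/r}\norm{v^3}{\Bs}E^{1/2}D^{1/2}
+C2^{-N(1/2-\delta)}D .
\]

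The high-frequency term is handled first. We choose $N\geq N_0$, with $N_0$ depending only on $c,C,\delta$, so that $C2^{-N(1/2-\delta)}\leq c/4$; this term is then absorbed into $cD$. For the low-frequency term, Young's inequality gives
\[
C2^{-N(1/2+\delta)}ED^{1/2}\leq \tfrac c4D+C'2^{-N(1+2\delta)}E^2 .
\]
We take $N(t)=\max\{N_0,\lceil \log_2 E(t)\rceil\}$, so that $2^{-N(1+2\delta)}E\leq E^{-2\delta}\leq 1$ and this contribution is at most $C'E$. For the middle term, Young's inequality gives
\[
C(2N+1)^{1/2-1/r}\norm{v^3}{\Bs}E^{1/2}D^{1/2}\leq \tfrac c4 D+C''(2N+1)^{1-2/r}\norm{v^3}{\Bs}^2E .
\]
Since $E\geq\mathrm e$, we have $N\leq C(1+\log E)\leq C\log E$. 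Collecting the three bounds, almost everywhere we obtain
\[
E'+\tfrac c4D\leq C\bigl(1+g+\norm{v^3}{\Bs}^2\bigr)E(\log E)^{1-2/r}.
\]
Here we used $\log E\geq1$ to place the factor $(\log E)^{1-2/r}$ on the terms $E$ and $gE$ as well. One measurability point needs care: $N(t)$ is a step function of $E$, which is continuous, so it is measurable. The inequality is purely pointwise in time, so this causes no problem.

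Next we close with Osgood's lemma. Set $h=C(1+g+\norm{v^3}{\Bs}^2)$, which lies in $L^1(t_0,T)$ because $T<\infty$ and by hypothesis. Dropping $D$, the function $\Psi(E)=\int_{\mathrm e}^{E}\frac{\mathrm ds}{s(\log s)^{1-2/r}}$ satisfies $\frac{\mathrm d}{\mathrm dt}\Psi(E)\leq h$ a.e. This follows from absolute continuity of $E$ on compact subintervals and the chain rule for the $C^1$ increasing function $\Psi$. Explicitly, $\Psi(E)=\frac{r}{2}(\log E)^{2/r}$ for $r<\infty$ and $\Psi(E)=\log\log E$ for $r=\infty$. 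In both cases $\Psi(s)\to\infty$ as $s\to\infty$, which is exactly the Osgood divergence. Hence
\[
\Psi(E(t))\leq\Psi(E(t_0))+\norm{h}{L^1},
\]
uniformly for $t<T$, and $\sup E<\infty$. Returning to the differential inequality and integrating on $[t_0,\tau]$ then gives
\[
\tfrac c4\int_{t_0}^\tau D\leq E(t_0)+\bigl(\sup E\bigr)(\log\sup E)^{1-2/r}\norm{h}{L^1},
\]
uniformly in $\tau$. Letting $\tau\uparrow T$ yields \eqref{eq:ED-bound}.

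The main obstacle is the choice of $N$. It must be large enough that the low-frequency coefficient $2^{-N(1/2+\delta)}E^{1/2}$ compensates the extra $E^{1/2}$, so that the low-frequency contribution stays linear in $E$. At the same time it must be only logarithmic in $E$, so that the middle-range loss $N^{1-2/r}$ remains an Osgood modulus. The choice $2^N\simeq E$ achieves both. The strict positivity of $\delta$ in both exponents $1/2\pm\delta$ of Lemma~\ref{lem:splitting} is what makes this possible, and the condition $r>2$ keeps the power of $\log E$ at most one, including $r=\infty$.
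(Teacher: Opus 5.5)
Your overall strategy is the paper's: pointwise use of Lemma~\ref{lem:splitting} with an energy-dependent cutoff, Young's inequality, and the Osgood closure through $\Psi(E)$. However, the high-frequency step fails as written.

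The principal term is $C\norm{v^3}{\Hs{3/2}}E^{1/2}D^{1/2}$ and $\norm{\nabla^2v^3}{\Hh}\le D^{1/2}$. The high-frequency contribution is therefore $C2^{-N(1/2-\delta)}E^{1/2}D$, not $C2^{-N(1/2-\delta)}D$; you dropped the factor $E^{1/2}$. Consequently a threshold $N_0$ depending only on $c,C,\delta$ cannot absorb this term. With your choice $2^N\simeq E$, its coefficient is $C2^{-N(1/2-\delta)}E^{1/2}\simeq CE^{\delta}$, which is unbounded as $E$ grows, so $CE^{\delta}D$ cannot be absorbed into $cD$. Your closing paragraph therefore has the constraint reversed. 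The condition that dictates how large $N$ must be is the high-frequency absorption, $2^{N(1/2-\delta)}\gtrsim E^{1/2}$, not the low-frequency compensation.

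The repair is the one the paper makes. Take $N(t)=\lceil N_0+K\log_2E(t)\rceil$ with $K\ge 1/(1-2\delta)=1/(2\beta)$, where $\beta=1/2-\delta$, and $N_0$ large. Then $C2^{-N\beta}E^{1/2}\le C2^{-N_0\beta}\le c/4$, so the high-frequency term is absorbed. The low-frequency coefficient satisfies $2^{-N(1/2+\delta)}E^{1/2}\le 1$, so that term is at most $CE^{1/2}D^{1/2}$; your Young step, which gives $2^{-N(1+2\delta)}E\le1$, also still works. Since $E\ge\mathrm e$, you still have $N\le C\log E$. With this change, the rest of your argument goes through exactly as you wrote it and coincides with the paper's proof: the middle-frequency bound, the inequality $E'+\tfrac c4D\le C(1+g+\norm{v^3}{\Bs}^2)E(\log E)^{1-2/r}$, the Osgood argument (including $r=\infty$), and the final integration bounding $\int D$.
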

\begin{proof}
Set $\beta=1/2-\delta>0$ and choose, at each time,
\begin{equation}\label{eq:cutoff}
N(t)=\left\lceil N_0+K\log_2E(t)\right\rceil.
\end{equation}
Take $K\geq1/(2\beta)$ and then $N_0$ sufficiently large. Since
$\norm{\nabla v^3}{\Hh}\leq E^{1/2}$ and $E\geq e$, we have
\begin{equation}\label{eq:cutoff-properties}
C2^{-N\beta}E^{1/2}\leq\frac c4,
\qquad
2^{-N(1/2+\delta)}\norm{\nabla v^3}{\Hh}\leq1,
\qquad N\leq C\log E.
\end{equation}
The cutoff is chosen pointwise in the spatial estimate \eqref{eq:splitting}; it is not inserted into a time-dependent energy identity. Consequently no derivative of $N(t)$ occurs. In particular, this argument does not interchange the time norm and the dyadic summation.

Apply Lemma~\ref{lem:splitting} with $f=v^3$ in \eqref{eq:general-energy}. The high-frequency contribution satisfies
\begin{equation}\label{eq:absorption}
C2^{-N\beta}\norm{\nabla^2v^3}{\Hh}E^{1/2}D^{1/2}
 \leq C2^{-N\beta}E^{1/2}D
 \leq\frac c4D.
\end{equation}
After this absorption, the low- and middle-frequency contributions are bounded by
\[
C\left(1+\norm{v^3}{\Bs}(\log E)^{1/2-1/r}\right)E^{1/2}D^{1/2}.
\]
Young's inequality and $\log E\geq1$ now give
\begin{equation}\label{eq:log-energy}
E'+\frac c2D
 \leq C\left(1+\norm{v^3}{\Bs}^2+g\right)
 E(\log E)^{1-2/r}.
\end{equation}
The local absolute continuity of $E$ permits the chain rule on every $[t_0,\tau]$, $\tau<T$. For $2<r<\infty$, divide by $E(\log E)^{1-2/r}$ and integrate to obtain
\begin{equation}\label{eq:osgood-finite}
(\log E(t))^{2/r}
 \leq(\log E(t_0))^{2/r}
 +C_r\int_{t_0}^t\left(1+\norm{v^3(s)}{\Bs}^2+g(s)\right)\dd s.
\end{equation}
At $r=\infty$, the corresponding estimate is
\begin{equation}\label{eq:osgood-infty}
\log\log E(t)
 \leq\log\log E(t_0)
 +C\int_{t_0}^t\left(1+\norm{v^3(s)}{\Bs}^2+g(s)\right)\dd s.
\end{equation}
These are the two Osgood alternatives associated with
\[
\int_{\mathrm e}^\infty\frac{\dd z}{z(\log z)^\theta}=\infty
\qquad\text{for }\theta\leq1.
\]
They bound $E$ uniformly on $[t_0,T)$, with constants independent of the upper integration time $\tau<T$. A further integration of \eqref{eq:log-energy}, followed by $\tau\uparrow T$, bounds the integral of $D$ and proves \eqref{eq:ED-bound}.
\end{proof}

\subsection{Application to the heat-flow remainder}\label{sec:heat}
Let $u$ be the maximal $H^1$ strong solution from Theorem~\ref{thm:main}, and assume that $T^*<\infty$ and \eqref{eq:main-condition} holds. Since $H^1\hookrightarrow\dot H^{1/2}$, Lemmas~\ref{lem:common-remainder} and \ref{lem:common-perturbation} apply with the fixed choice of $q$. Form $U,v$ as in \eqref{eq:heat-decomposition}. In particular, $E(b)<\infty$ and the local energy tests are justified.

Combining \eqref{eq:endpoint-principal} with \eqref{eq:common-perturbed-energy} gives
\begin{equation}\label{eq:endpoint-perturbed}
 E'+cD\leq C\norm{v^3}{\Hs{3/2}}E^{1/2}D^{1/2}+gE,
 \qquad g\in L^1(b,T^*).
\end{equation}
The same fixed heat flow is bounded in $\dot B^{3/2}_{2,r}$ on $[b,T^*)$. Indeed, \eqref{eq:np-heat-coefficients}, together with the positive-time derivative bounds, controls its $\dot H^{3/2}$ norm, and $\dot H^{3/2}\hookrightarrow\dot B^{3/2}_{2,r}$ for $r\geq2$. Hence
\begin{equation}\label{eq:perturbed-Besov}
 \norm{v^3(t)}{\Bs}\leq\norm{u^3(t)}{\Bs}+\norm{U^3(t)}{\Bs}
\end{equation}
implies $v^3\in L^2(b,T^*;\dot B^{3/2}_{2,r})$. Proposition~\ref{prop:Osgood} therefore gives
\begin{equation}\label{eq:final-auxiliary}
 \sup_{b\leq t<T^*}E(t)+\int_b^{T^*}D(t)\dd t<\infty.
\end{equation}

\subsection{Completion of the endpoint proof}\label{subsec:endpoint-completion}
Set $\eta=1/2+\delta\in(0,1)$. Interpolation between the two Sobolev levels in \eqref{eq:layers} gives
\begin{equation}\label{eq:V-interpolation}
\norm{v^3}{\Hs{3/2}}
 \leq\norm{\nabla v^3}{\Hh}^{1-\eta}
      \norm{\nabla^2v^3}{\Hh}^{\eta}
 \leq E^{(1-\eta)/2}D^{\eta/2}.
\end{equation}
By H\"older's inequality in time and \eqref{eq:final-auxiliary},
\begin{equation}\label{eq:V-integral}
\begin{split}
\int_b^{T^*}\norm{v^3(t)}{\Hs{3/2}}^2\dd t
 &\leq\left(\sup_{b\leq t<T^*}E(t)\right)^{1-\eta}
 (T^*-b)^{1-\eta}
 \left(\int_b^{T^*}D(t)\dd t\right)^\eta<\infty.
\end{split}
\end{equation}
Adding back the heat flow gives
\begin{equation}\label{eq:u3-critical}
u^3\in L^2(b,T^*;\dot H^{3/2}(\R^3)).
\end{equation}

For the vertical vorticity, the three-dimensional Gagliardo--Nirenberg inequality and \eqref{eq:G-W} imply
\begin{equation}\label{eq:omega-interpolation}
\norm{\omega}{\Lthree}
 \leq C_q\norm{\omega}{\Lq}^{1-\eta}
             \norm{\nabla\omega}{\Lq}^{\eta}
 \leq C_qE^{(1-\eta)/2}D^{\eta/2}.
\end{equation}
Here $1/3=1/q-\eta/3$, so $\eta=3/q-1=1/2+\delta$. The same time estimate as in \eqref{eq:V-integral} gives
$\omega\in L^2(b,T^*;L^3(\R^3))$. Since $\omega_U$ is uniformly bounded in $L^3(\R^3)$, the original vertical vorticity satisfies
\begin{equation}\label{eq:omegau-critical}
\omega_u:=\partial_1u^2-\partial_2u^1=\omega+\omega_U
 \in L^2(b,T^*;L^3(\R^3)).
\end{equation}

Since $\norm{\nabla u^3}3\leq C\norm{u^3}{\dot H^{3/2}}$, equations \eqref{eq:u3-critical} and \eqref{eq:omegau-critical} make the coefficient in \eqref{eq:full-energy} integrable. Lemma~\ref{lem:full-gradient} and the basic kinetic energy bound yield
\begin{equation}\label{eq:full-bound}
 \sup_{b\leq t<T^*}\norm{u(t)}{H^1}^2
 +\int_b^{T^*}\norm{u(t)}{\dot H^2}^2\dd t<\infty.
\end{equation}
The lower-order terms are integrable on the finite time interval, so the heat bounds also give $v\in L^\infty(b,T^*;H^1)\cap L^2(b,T^*;H^2)$. Lemma~\ref{lem:terminal-trace} provides a strong terminal trace and continuation, with $h=u(a)\in H^1$. This proves Theorem~\ref{thm:main}.

\section{The nonendpoint Sobolev criterion}
\label{sec:nonendpoint}

This section proves Theorem~\ref{thm:nonendpoint}. Fix $2<p<\infty$ and put
\[
 s_p=\frac12+\frac2p.
\]
By rotation we take $\boldsymbol e=(0,0,1)$. The proof separates the fixed-time Sobolev estimate from the initial-data issue. The former is taken from the published work of Han, Lei, Li, and Zhao \cite{HLLZ2019}; the latter is handled by the common remainder and perturbation lemmas in Section~\ref{subsec:common-heat}.

\subsection{The fixed-time Sobolev estimate}
Fix $p>2$. We choose $q\in(3/2,2)$ in the near-$2$ range used in Sections~4--7 of \cite{HLLZ2019}, and set
\begin{equation}\label{eq:np-parameters}
 \delta=\frac3q-\frac32\in\left(0,\frac12\right).
\end{equation}
The admissible range is nonempty for every fixed finite $p$. When $2<p<4$, the construction in \cite{HLLZ2019} permits
$\max\{p/2,3/2\}<q<2$; when $p=4$, one takes $q<2$ sufficiently close to $2$. In both cases we may also impose
\begin{equation}\label{eq:np-q-condition-low-p}
 0<\frac2p+\frac3q-2<\frac2p,
\end{equation}
which is the condition used in the final interpolation step of that work. When $p>4$, the product estimates in Section~6 of \cite{HLLZ2019} involve a further auxiliary exponent $\varepsilon>0$. All of their exponent restrictions are strict and, at $(\delta,\varepsilon)=(0,0)$, have positive margins depending on $p$ (in particular on $1/4-1/p>0$). Hence one may first choose $q$ so close to $2$ that $\delta$ is sufficiently small and then choose $\varepsilon>0$ sufficiently small. This gives a common admissible choice for all estimates used below. We fix one such $q$ once and for all; constants may depend on $p$ and $q$, and no uniformity as $p\downarrow2$ or $p\uparrow\infty$ is asserted.

Use $X,W,Y,Z$ and the pairings from \eqref{eq:common-XYZW} and \eqref{eq:common-pairings}, with the present value of $q$. Write
\begin{equation}\label{eq:np-energy-notation}
 E_p=\mathrm e+X^2+Y^2,\qquad D_p=W^2+Z^2,\qquad
 A_p=\norm{w^3}{\dot H^{s_p}}.
\end{equation}
The subscript $p$ records the dependence of the chosen auxiliary exponent on $p$; it does not change the common energy formula.

\begin{lemma}[Fixed-time Sobolev estimate of Han--Lei--Li--Zhao]
\label{lem:np-principal}
For a smooth divergence-free field $w$ with finite quantities in \eqref{eq:np-energy-notation}, the pairings \eqref{eq:common-pairings} satisfy
\begin{equation}\label{eq:np-principal-bound}
 |\mathcal N_\omega(w)|+|\mathcal N_3(w)|
 \leq C_{p,q}A_pE_p^{1/p}D_p^{1-1/p}.
\end{equation}
The assertion is a spatial estimate at a fixed time and does not contain an assumption at the original initial time.
\end{lemma}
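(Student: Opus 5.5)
This estimate is essentially the fixed-time content of Sections~4--7 of \cite{HLLZ2019}, so the plan is to reorganize their bounds in the present notation rather than build a new argument. For the vertical vorticity pairing $\mathcal N_\omega(w)$ I would follow the structure of Proposition~\ref{prop:vorticity}. First split the stretching integrand $(\curl w)\cdot\nabla w^3$ into $\partial_3w^3\,\omega$ and the cross terms $\partial_2w^3\partial_3w^1-\partial_1w^3\partial_3w^2$. Then use the componentwise reconstruction \eqref{eq:reconstruction} to write the cross terms as a curl part, involving $\partial_i w^3\,\partial_j\Delta_h^{-1}\partial_3\omega$, and a divergence part, involving $\partial_iw^3\,\partial_j\Delta_h^{-1}\partial_3^2w^3$.

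Each resulting trilinear form is bounded by H\"older's inequality in mixed norms. The $w^3$ factor is placed in an anisotropic space into which $\dot H^{s_p}$ embeds; this replaces the trace estimate of Lemma~\ref{lem:trace}, which was specific to $s=3/2$. The $\omega$ factors are treated by horizontal Hardy--Littlewood--Sobolev together with \eqref{eq:G-W}. Since $s_p<3/2$, the missing $3/2-s_p=1/2-2/p$ derivatives must come from the other factors. They are obtained by interpolating $X$ against $W$ (via Gagliardo--Nirenberg, as in \eqref{eq:omega-interpolation}) and $Y$ against $Z$ (via the layer comparison \eqref{eq:layers}).

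The target of this interpolation is the homogeneity $E_p^{1/p}D_p^{1-1/p}$. Scaling fixes these powers: $A_p$ carries the critical weight, and the remaining product must have $E_p$-degree $1/p$ and $D_p$-degree $1-1/p$, so that Young's inequality later gives $A_p^pE_p$. I would verify this exponent bookkeeping separately for each of the four types of terms $YZ,YG,XZ,XG$ appearing in the analogue of \eqref{eq:HLLZ}. For $2<p\le4$ the condition \eqref{eq:np-q-condition-low-p} on $q$ is exactly what makes the Sobolev/H\"older exponents admissible and the interpolation parameter lie in $[0,1]$.

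For $\mathcal N_3(w)=-\sum I_j(w)$ I would treat $I_1,I_2,I_3$ as in \cite[Section~5]{HLLZ2019}.
\begin{itemize}
\item In $I_2$, the transport structure is used: the commutator of $|\nabla_h|^{-\delta}$ with $w\cdot\nabla$ is needed, since the pairing does not cancel exactly in $\dot H^{-\delta,0}$.
\item In $I_1$ and $I_3$, the horizontal components $w^h$ are replaced via \eqref{eq:reconstruction}. Products are estimated with Lemma~\ref{lem:products}-type bounds, and the pressure is handled through the bounded multiplier $\partial_k\partial_3(-\Delta)^{-1}$.
\end{itemize}
Every term again reduces to $A_p$ times factors that interpolate to $E_p^{1/p}D_p^{1-1/p}$.

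I expect the main obstacle to be the regime $p>4$. There $s_p<1$, and the $w^3$ factor cannot absorb a full horizontal derivative, so the anisotropic product estimates need the auxiliary exponent $\varepsilon$ of \cite[Section~6]{HLLZ2019}. I would first try to check that every strict exponent inequality holds at $(\delta,\varepsilon)=(0,0)$ with a margin proportional to $1/4-1/p$. By continuity, choosing $q$ close to $2$ and then $\varepsilon$ small gives a common admissible choice. Finally, since the bound is purely spatial, no information at time zero enters, which justifies the last sentence of the lemma.
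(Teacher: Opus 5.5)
Your plan is essentially the paper's proof. Both take the fixed-time bounds of Han--Lei--Li--Zhao: Sections~4--5 for $2<p\le 4$, and Section~6 with the auxiliary $\varepsilon$ (chosen after $q$ is fixed close to $2$) for $p>4$. Both reduce these bounds to $A_p$ times the four monomials $X^{2/p}W^{2-2/p}$, $X^{2/p}W^{1-2/p}Z$, $Y^{2/p}Z^{2-2/p}$, $Y^{2/p}Z^{1-2/p}W$, and close with a weighted arithmetic--geometric mean inequality. The paper makes your scaling bookkeeping explicit by converting the cited $\|\omega_{q/2}\|_2$ and $\|\nabla\omega_{q/2}\|_2$ factors exactly through $W=X^{1-q/2}\|\nabla\omega_{q/2}\|_2$.

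One arithmetic slip should be fixed: the derivative deficit is $3/2-s_p=1-2/p$, not $1/2-2/p$. The value $1-2/p$ is what your (correct) target homogeneity $E_p^{1/p}D_p^{1-1/p}$ actually encodes.
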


\begin{proof}
We spell out the normalization needed to pass from the cited estimates to \eqref{eq:np-principal-bound}. Put
\[
 F=\norm{\omega_{q/2}}2=X^{q/2},
 \qquad J=\norm{\nabla\omega_{q/2}}2,
 \qquad W=X^{1-q/2}J.
\]
Equations (7.8)--(7.10) of the preprint version \arxiv{1708.04119v1}, before their final applications of Young's inequality, estimate the vorticity and third-component pairings by $C A_p$ times combinations of
\[
 X^{2-q}F^{2/p}J^{2-2/p},
 \qquad
 X^{2-q}F^{1-2/q+2/p}J^{1-2/p}Z,
\]
\[
 Y^{2/p}Z^{2-2/p},
 \qquad
 \norm{\nabla\omega}{q}\,Y^{2/p}Z^{1-2/p},
\]
together with repetitions of the first two expressions. The first two expressions satisfy the exact identities
\begin{align}
 X^{2-q}F^{2/p}J^{2-2/p}
 &=X^{2/p}W^{2-2/p},\label{eq:np-normalization-one}\\
 X^{2-q}F^{1-2/q+2/p}J^{1-2/p}Z
 &=X^{2/p}W^{1-2/p}Z.\label{eq:np-normalization-two}
\end{align}
Moreover, \eqref{eq:G-W} bounds the last expression by a constant times
$Y^{2/p}Z^{1-2/p}W$. Consequently the cited pre-Young estimates give a constant multiple of $A_p$ times
\begin{equation}\label{eq:np-four-monomials}
 X^{2/p}W^{2-2/p}
 +X^{2/p}W^{1-2/p}Z
 +Y^{2/p}Z^{2-2/p}
 +Y^{2/p}Z^{1-2/p}W.
\end{equation}
The estimates in Sections~4--5 of \cite{HLLZ2019} give these terms for $2<p\leq4$, while Section~6 gives the corresponding anisotropic product and commutator bounds for $p>4$. Finally, weighted arithmetic--geometric mean gives, term by term,
\[
 X^{2/p}W^{2-2/p},\;X^{2/p}W^{1-2/p}Z,
 \;Y^{2/p}Z^{2-2/p},\;Y^{2/p}Z^{1-2/p}W
 \leq C E_p^{1/p}D_p^{1-1/p}.
\]
This proves \eqref{eq:np-principal-bound}. If $X=0$, the assertion follows by the standard regularization used in the $L^q$ vorticity test.
\end{proof}

\subsection{Heat-flow perturbation and Gronwall control}
Fix $0<a<b<T^*$ and form $U,v$ by \eqref{eq:heat-decomposition}. Lemma~\ref{lem:common-remainder} supplies the finite-energy estimate \eqref{eq:np-basic-remainder}, the required positive-time $L^q$ vorticity regularity, and local admissibility of $E_p,D_p$. These statements require no component condition. The full velocity need not be in $L^2$; all basic energy estimates are applied to $v$.

Lemma~\ref{lem:np-principal} bounds the principal terms in the common perturbed inequality \eqref{eq:common-perturbed-energy}. Thus
\begin{equation}\label{eq:np-coupled-energy}
 E_p'+cD_p\leq C A_p E_p^{1/p}D_p^{1-1/p}+gE_p,
 \qquad g\in L^1(b,T^*),
\end{equation}
where $A_p=\norm{v^3}{\dot H^{s_p}}$ and $g$ is given by \eqref{eq:common-g}. Young's inequality gives
\begin{equation}\label{eq:np-gronwall}
 E_p'+\frac c2D_p\leq C(A_p^p+g)E_p.
\end{equation}
All heat-flow transport, stretching, and pressure errors are already included in Lemma~\ref{lem:common-perturbation}; no new frequency decomposition is needed here.

The heat component satisfies
\begin{equation}\label{eq:np-heat-sobolev}
 \norm{U^3(t)}{\dot H^{s_p}}
 \leq C(t-a)^{-1/p}\norm{u(a)}{\dot H^{1/2}},\qquad t\geq b.
\end{equation}
Since $b>a$ and $T^*<\infty$, the right-hand side is in $L^p(b,T^*)$. The hypothesis \eqref{eq:nonendpoint-condition} therefore gives $A_p\in L^p(b,T^*)$. Applying Gronwall to \eqref{eq:np-gronwall} yields
\begin{equation}\label{eq:np-final-auxiliary}
 \sup_{b\leq t<T^*}E_p(t)+\int_b^{T^*}D_p(t)\dd t<\infty.
\end{equation}

\subsection{Completion of the nonendpoint proof}
Put $\eta=1/2+\delta\in(0,1)$. Since the horizontal negative norm dominates the isotropic one, interpolation and Sobolev embedding give
\begin{equation}\label{eq:np-L3-interpolation}
 \norm{\nabla v^3}3+\norm{\omega}3
 \leq C E_p^{(1-\eta)/2}D_p^{\eta/2}.
\end{equation}
For the vorticity term use $1/3=1/q-\eta/3$ and \eqref{eq:G-W}; for $\nabla v^3$, interpolate between $\dot H^{-\delta}$ and $\dot H^{1-\delta}$ to reach $\dot H^{1/2}$. H\"older's inequality in time and \eqref{eq:np-final-auxiliary}, followed by addition of the smooth heat flow, yield
\begin{equation}\label{eq:np-two-L3}
 \omega_u:=\partial_1u^2-\partial_2u^1\in L^2(b,T^*;L^3),
 \qquad
 \nabla u^3\in L^2(b,T^*;L^3).
\end{equation}

On every compact subinterval of $(b,T^*)$, the Fujita--Kato smoothing gives the hypotheses of Lemma~\ref{lem:homogeneous-gradient-identity}. Lemma~\ref{lem:full-gradient} therefore applies without an $L^2$ hypothesis on $u$. Since the coefficient in \eqref{eq:full-energy} belongs to $L^1(b,T^*)$ by \eqref{eq:np-two-L3}, Gronwall's inequality gives
\begin{equation}\label{eq:np-full-gradient-bound}
 \sup_{b\leq t<T^*}\norm{\nabla u(t)}2^2
 +\int_b^{T^*}\norm{\Delta u(t)}2^2\dd t<\infty.
\end{equation}
This does not assert that the full velocity belongs to $L^\infty L^2$. Combining \eqref{eq:np-full-gradient-bound}, the finite-energy remainder estimate \eqref{eq:np-basic-remainder}, and the heat bounds gives
\begin{equation}\label{eq:np-remainder-H1}
 \sup_{b\leq t<T^*}\norm{v(t)}{H^1}^2
 +\int_b^{T^*}\norm{v(t)}{H^2}^2\dd t<\infty.
\end{equation}
Lemma~\ref{lem:terminal-trace} now yields
\[
 \partial_tv\in L^2(b,T^*;L^2),\qquad
 v(t)\longrightarrow v_*\quad\text{strongly in }H^1
 \quad\text{as }t\uparrow T^*.
\]
Restarting the perturbed equation with datum $v_*$ continues $v$, and hence $u=U+v$, beyond $T^*$ in the Fujita--Kato class. This proves Theorem~\ref{thm:nonendpoint}.

\section*{Data availability}
No experimental or observational dataset is used in this mathematical study.

\section*{Declaration of generative AI and AI-assisted technologies}
During the preparation of this manuscript, OpenAI's ChatGPT was used to assist with language editing, LaTeX preparation, bibliographic checks, and preliminary review of the mathematical exposition and proposed proof steps. The authors are responsible for independently checking all arguments, citations, and statements in the final submitted version.

\end{document}